\renewcommand{\Re}{\operatorname{Re}}
\renewcommand{\Im}{\operatorname{Im}}
\def\R{\ensuremath\mathbb{R}}
\def\A{\ensuremath\mathbb{A}}
\def\Z{\ensuremath\mathbb{Z}}
\def\Q{\ensuremath\mathbb{Q}}
\def\H{\ensuremath\mathbb{H}}
\def\fatC{\ensuremath {\bf C}}
\newtheorem{thm}{Theorem}[section]
\newtheorem{cor}[thm]{Corollary}
\newtheorem{lemma}[thm]{Lemma}
\newtheorem{prop}[thm]{Proposition}
\theoremstyle{remark}
\newtheorem{remark}[thm]{Remark}
\def\eps{\ensuremath\varepsilon}
\def\Cl {\text{\rm Cl }}
\def\Hess{\text{\rm Hess}}
\def\PSL{\hbox{\rm PSL}}
\def\GL{\hbox{\rm GL}}
\def\modulo{\text{ \rm mod }}
\def\x{\ensuremath \boldsymbol{x}}
\def\X{\ensuremath \boldsymbol{X}}
\def\y{\ensuremath \boldsymbol{y}}
\def\z{\ensuremath \boldsymbol{z}}
\def\m{\ensuremath \boldsymbol{\mu}}
\numberwithin{equation}{section}
\begin{document}
\title[Hybrid subconvexity and uniform sup norm bounds of Eisenstein series]{Hybrid subconvexity for class group $L$-functions and uniform sup norm bounds of Eisenstein series}
\subjclass[2010]{11F03(primary), and 11L07(secondary)}
\author{Asbjørn Christian Nordentoft}
\address{Department of Mathematical Sciences, University of Copenhagen, Universitetsparken 5, 2100 Copenhagen Ø, Denmark}
\email{\href{mailto:acnordentoft@outlook.com}{acnordentoft@outlook.com}}
\date{\today}
\begin{abstract}
In this paper we study hybrid subconvexity bounds for class group $L$-functions associated to quadratic extensions $K/\Q$ (real or imaginary). Our proof relies on relating the class group $L$-functions to Eisenstein series evaluated at Heegner points using formulas due to Hecke. The main technical contribution is the following uniform sup norm bound for Eisenstein series;
$$E(z,1/2+it)\ll_\eps y^{1/2} (|t|+1)^{1/3+\eps},\quad y\gg 1,$$ 
extending work of Blomer and Titchmarsh. Finally we propose a uniform version of the sup norm conjecture for Eisenstein series. 
\end{abstract}
\maketitle
\author
\section{Introduction}
This paper is concerned with the family of $L$-functions $L_K(s,\chi)$ associated to a character $\chi$ of the (wide) class group $\Cl (K)$ of a quadratic field extension $K/\Q$ (real or imaginary) of discriminant $D$. One of our results is a hybrid subconvexity bound in terms of the discriminant $D$ and the archimedian parameter $t$ where $s=1/2+it$ (both for individual class group $L$-functions and for the second moment of the entire family). We will do this by relating the subconvexity bound for class group $L$-functions to sup norm bounds of Eisenstein series via formulas due to Hecke. Our second main result is what we will call a {\it uniform sup norm bound} of Eisenstein series. 
\subsection{Class group $L$-functions}The study of analytic properties of the family of class group $L$-functions was initiated by Duke, Friedlander and Iwaniec in \cite{DuFrIw95} where they computed the second moment of class group $L$-functions in the limit $D\rightarrow -\infty$. Other notable works on the family of class group $L$-functions include \cite{Bl04}, \cite{DuFrIw02}, \cite{BlHaMi07} \cite{Te11}.\\
Our approach in the imaginary quadratic case is to use a classical formula of Hecke, which relates class group $L$-functions to Eisenstein series evaluated at Heegner points; 
\begin{align}
 \label{heegnersum} L_K(s, \chi)= \frac{2^{s+1} \zeta(2s) |D|^{-s/2}}{\omega_K } \sum_{\mathfrak{a}}\chi(\mathfrak{a}) E(z_{\mathfrak{a}},s),
 \end{align}
where the sum runs over a complete set of representatives for the class group of the imaginary quadratic field $K$ of discriminant $D$, $z_{\mathfrak{a}}\in \H$ is the associated Heegner point and $\omega_K \in \{2,4,6\}$. There is a real quadratic analogue also due to Hecke (see (\ref{heegnerintegral}) below). These formulas give a connection between subconvexity bounds and the so-called {\it sup norm problem} for Eisenstein series, which we will introduce shortly. 
\begin{remark}The connection between the sup norm problem and subconvexity estimates can be traced back to Sarnak \cite[(4.19)]{Sa95}. However this paper together with the recent work of Hu and Saha \cite{HuSaha19} seem to be the first time sup norm results have been used to obtain new subconvexity results. Hu and Saha apply sup norm bounds of automorphic forms on quaternion algebras (in the depth aspect) to obtain subconvexity estimates in the depth aspect for $L(1/2, f \otimes \theta_\chi)$, where $f$ is a quaternionic automorphic form and $\theta_\chi$ is an essentially fixed theta series.
\end{remark}
\begin{remark}The formula (\ref{heegnersum}) was also the starting point for Templier in \cite{Te11}, where it was combined with equidistribution of Heegner points to give an alternative computation (compared with \cite{DuFrIw95}) of the second moment of the family of class group $L$-functions as $D\rightarrow -\infty$. Similarly Michel and Venkatesh \cite{MichelVenk07} used an analogue of (\ref{heegnersum}) in the case of cusp forms due to Zhang \cite{Zh01}, \cite{Zh04} to deduce non-vanishing results for the central values of the corresponding Rankin-Selberg $L$-functions. The approach of Michel and Venkatesh was then applied by Dittmer, Proulx and Seybert in \cite{DiPrSe15} to deduce non-vanishing for class group $L$-functions as well (their method only shows non-vanishing for one class group character for each $K$, whereas Blomer in \cite{Bl04} achieved a much stronger result using mollification). \end{remark}

\subsection{The sup norm problem}Now let $\Gamma_0(1)=\PSL_2(\Z)$ and denote by $X_0(1):=\Gamma_0(1)\backslash \H$ the modular curve. The sup norm problem for $X_0(1)$ is concerned with bounds of the following form for some fixed $\theta>0$;
$$  \sup_{z\in C} |u_j(z)| \ll_{C} t_j^{\theta}, $$
where $u_j$ is a Maass form of level 1, $t_j$ is the spectral parameter and $C\subset \H$ is compact. The case $\theta=1/4+\eps$ is known as the {\it convexity bound} and is elementary to prove, but it is conjectured \cite[Conjecture 3.10]{Sa95} that any $\theta>0$ is admissible. Iwaniec and Sarnak in their seminal paper \cite{IwSa} were the first to go beyond the convexity bound by proving the bound $\ll_\eps t_j^{5/24+\eps}$.  \\ 
In this paper we will focus on the analogue for the continuous spectrum which is constituted by Eisenstein series. This means that we are concerned with bounds of the type
\begin{align} \label{eissup} \sup_{z\in C} |E(z,1/2+it)| \ll_{C} (|t|+1)^{\theta}, \end{align}
where $\theta>0$ is fixed and $C$ is compact. In this case the convexity bound is $\theta=1/2+\eps$, and again the sup norm conjecture predicts that any $\theta>0$ is admissible. Iwaniec and Sarnak's method also applies in this case and yields similarly the bound $\ll_\eps (|t|+1)^{5/12+\eps}$. In \cite{Yo18} Young used a slight modification of the Iwaniec--Sarnak method to prove the bound $\ll_\eps (|t|+1)^{3/8+\eps}$. In \cite{Bl18} Blomer improved this using exponential sum methods, building on earlier work of Titchmarsh \cite{Ti}, and proved the Weyl type bound $\ll_\eps (|t|+1)^{1/3+\eps}$. Finally the sup norm problem for Eisenstein series over general number fields has been dealt with in the work of Assing \cite{Assing19}.\\ 

Plugging Blomer's result into (\ref{heegnersum}) yields immediately a subconvexity bound for $L_K(s, \chi)$ in the $t$-aspect, which recovers a result of S\"{o}hne \cite{Soehne97} (the conductor of $L_K(s, \chi)$ is $|D|(|t|+1)^2 $, which means that the convexity bound is $\ll_\eps |D|^{1/4+\eps}(|t|+1)^{1/2+\eps} $). If one however wants a hybrid subconvexity estimate, one needs to control the $D$-dependence in (\ref{heegnersum}). This leads to what we will call the {\it uniform sup norm problem}, which are sup norm bounds with an explicit dependence on $z$. In a similar vein Huang and Xu \cite{HuangXu17} studied sup norm bounds of Eisenstein series with level and obtained bounds uniform in both the spectral parameter and the level.

\subsection{Statement of results}Our first result is the following translation between uniform sup norm bounds of the Eisenstein series $E(z,s)$ and hybrid subconvexity bounds for $L_K(s, \chi)$. Let 
\begin{align}\label{F}\mathcal{F}:=\{ z\in \H\mid -1/2\leq \Re z\leq 0, |z|\geq 1\text{ \rm or }  0< \Re z< 1/2, |z|>1\},\end{align} 
 denote the standard fundamental domain for $\Gamma_0(1)$.
 \begin{thm} \label{translationthm} 
Assume the following uniform bound uniformly for all $z=x+iy\in \mathcal{F}$;
\begin{align} \label{bound1} E(z,1/2+it) \ll  y^\delta (|t|+1)^\theta, \end{align}
with $1/2\leq \delta\leq 1$ and $\theta>0$. Then it follows that
\begin{align}\label{singleL} L_K(1/2+it, \chi )\ll_{\eps} |D|^{1/4+\eps}\,  (|t|+1)^{\theta+\eps},  \end{align}
for any $\eps>0$ and $\chi\in \widehat{\Cl (K)}$, a (wide) class group character of a quadratic extension $K/\Q$ (real or imaginary) of discriminant $D$. \\
Furthermore it also follows from (\ref{bound1}) that 
\begin{align}\label{squareL}\sum_{\chi \in \widehat{\Cl (K)}} |L_K(1/2+it, \chi )|^2\ll_{\eps} |D|^{\delta+\eps}\,  (|t|+1)^{2\theta+\eps}, \end{align}
 for any $\eps>0$.
\end{thm}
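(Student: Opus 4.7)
The plan is to substitute Hecke's formula (\ref{heegnersum}), or its real-quadratic integral analogue, into $L_K(1/2+it,\chi)$ and then exploit the uniform sup norm bound (\ref{bound1}) either pointwise (to get (\ref{singleL})) or after squaring and applying character orthogonality (to get (\ref{squareL})). Throughout, $|\zeta(1+2it)|\ll_\eps (|t|+1)^\eps$ absorbs the $\eps$-loss in the $t$-aspect, while $|D|^{-s/2}$ contributes $|D|^{-1/4}$ in absolute value.

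In the imaginary case, I would parametrize the Heegner points in $\mathcal{F}$ via reduced binary quadratic forms $[a,b,c]$ of discriminant $D$, so that $z_{\mathfrak{a}}=(-b+i\sqrt{|D|})/(2a)$ and $y_{\mathfrak{a}}=\sqrt{|D|}/(2a)$ with $1\leq a\leq\sqrt{|D|/3}$. Taking absolute values in (\ref{heegnersum}) and inserting (\ref{bound1}) reduces (\ref{singleL}) to the divisor-type estimate
\[\sum_{\mathfrak{a}}y_{\mathfrak{a}}^\delta\ll |D|^{\delta/2}\sum_{a\leq\sqrt{|D|/3}}\frac{r(a)}{a^\delta}\ll|D|^{1/2+\eps},\]
where $r(a)$ counts reduced forms with first coefficient $a$ and satisfies $r(a)\ll|D|^\eps\tau(a)$; this is uniform in $\delta\in[1/2,1]$. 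Combined with the prefactor $|D|^{-1/4}$ this gives (\ref{singleL}).

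For (\ref{squareL}), squaring (\ref{heegnersum}), summing over $\chi\in\widehat{\Cl(K)}$ and using orthogonality of characters collapses the double sum to $h(K)\sum_{\mathfrak{a}}|E(z_{\mathfrak{a}},1/2+it)|^2$. Inserting (\ref{bound1}) leads to the analogous divisor sum with exponent $2\delta\geq 1$, which converges to $O(|D|^\eps)$ when $2\delta>1$ and picks up only a logarithm when $2\delta=1$; in either case $\sum_{\mathfrak{a}}y_{\mathfrak{a}}^{2\delta}\ll|D|^{\delta+\eps}$. Combining with $h(K)\ll|D|^{1/2+\eps}$ from the class number formula and the squared prefactor $|D|^{-1/2}$ yields (\ref{squareL}).

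In the real quadratic case, Hecke's formula expresses $L_K(s,\chi)$ as a sum over ideal classes of integrals of $E(z,s)$ along closed geodesics $\gamma_{\mathfrak{a}}\subset X_0(1)$, each of hyperbolic length equal to the regulator $R_K$. The same two arguments would go through, with the discrete sums $\sum_{\mathfrak{a}}y_{\mathfrak{a}}^\delta$ and $\sum_{\mathfrak{a}}y_{\mathfrak{a}}^{2\delta}$ replaced by the geodesic averages $\sum_{\mathfrak{a}}\int_{\gamma_{\mathfrak{a}}}y^\delta\,ds$ and $\sum_{\mathfrak{a}}\int_{\gamma_{\mathfrak{a}}}y^{2\delta}\,ds$, and with $h(K)$ replaced by $h(K)R_K\ll D^{1/2+\eps}$ from the class number formula. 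I expect the main obstacle to lie precisely here: one must control the cusp excursions of closed geodesics of discriminant $D$ in order to recover the same quality $|D|^{1/2+\eps}$ and $|D|^{\delta+\eps}$ as in the imaginary case, presumably via the correspondence between closed geodesics and $\Gamma$-equivalence classes of indefinite binary quadratic forms and standard estimates on the geometry of the cusp of $X_0(1)$.
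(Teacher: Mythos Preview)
Your approach is the same as the paper's: Hecke's formula, the height-sum lemma via reduced forms, orthogonality plus Siegel for the second moment. The imaginary case is essentially complete and matches the paper's argument (the paper packages your divisor estimate as a lemma $\sum_{\mathfrak a}y(z_{\mathfrak a})^\delta\ll_\eps|D|^{\max(\delta,1)/2+\eps}$, proved exactly as you indicate).

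For the real quadratic case you have correctly located the work that remains. Two points to complete it. First, the geodesic height sum $\sum_{\mathfrak a}\int_{C_{\mathfrak a}}y^\delta\,|ds|\ll_\eps D^{\max(\delta,1)/2+\eps}$ is proved in the paper by unfolding: one rewrites the sum as $\sum_{Q\in\Omega_D}\int_{S_Q\cap\mathcal F}y^\delta\,|ds|$, quotients by $\Gamma_\infty$ to reduce to representatives with $0<a\ll D^{1/2}$ and $|b|\le 2a$, and then bounds each semicircle arc trivially by its Euclidean length $\ll D^{1/2}/a$ times $\max y^{\delta-1}$; the same $\rho_D(a)\ll a^\eps$ finishes it. Second, for (\ref{squareL}) in the real case, orthogonality gives $\frac{h(K)}{D^{1/2}}\sum_{\mathfrak a}\bigl|\int_{C_{\mathfrak a}}E\,|ds|\bigr|^2$, and you need one application of Cauchy--Schwarz on each geodesic integral, $\bigl|\int_{C_{\mathfrak a}}y^\delta\,|ds|\bigr|^2\le 2\log\epsilon_K\int_{C_{\mathfrak a}}y^{2\delta}\,|ds|$, before invoking the height-sum lemma with exponent $2\delta$ and Siegel's bound $h(K)\log\epsilon_K\ll D^{1/2+\eps}$. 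With these two additions your sketch becomes the paper's proof.
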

The second part of this paper is concerned with proving a result of the type (\ref{bound1}). As we will see in Section \ref{extyoung} below, the results of Young \cite{Yo18} imply the following.

\begin{thm}[M. Young]\label{young}
For $z\in \mathcal{F}$, the standard fundamental domain (\ref{F}) for $\Gamma_0(1)$, we have
\begin{equation}  \label{youngbound} E(z, 1/2+it) \ll_\eps y^{1/2} (|t|+1)^{3/8+\eps},
\end{equation}
for any $\eps>0$.
\end{thm}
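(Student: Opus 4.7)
\emph{Proof plan.} The plan is to split the argument according to whether $y$ is large or moderate compared to $T := |t|+1$, using the Fourier expansion in the cusp for the former and extending Young's compact-set bound \cite{Yo18} with explicit tracking of $y$-dependence in the latter. Recall that $z \in \mathcal{F}$ satisfies $y \geq \sqrt{3}/2$, so the only additional feature compared to \cite{Yo18} is the behaviour as $y \to \infty$; I would pick a threshold $y \asymp T$ and treat the two ranges separately.

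\emph{Cusp range $y \geq T$.} I would write the Fourier expansion of $E(z, 1/2+it)$ at $\infty$ in the form
\begin{equation*}
E(z, 1/2+it) = y^{1/2+it} + \phi(1/2+it)\, y^{1/2-it} + \sqrt{y} \sum_{n \neq 0} a_n(t)\, K_{it}(2\pi |n| y)\, e(nx),
\end{equation*}
where $|\phi(1/2+it)| = 1$ by the standard identity $\phi(s)\phi(1-s) = 1$, and the coefficients $a_n(t)$ are of polynomial growth in $|t|$ and $|n|^\eps$. For $|n| y \gg T$ one has $K_{it}(2\pi |n| y) \ll_A e^{-\pi |n| y}$, so the non-constant sum is $O(e^{-y/2})$ and is negligible. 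The constant term is bounded by $2 y^{1/2}$, which certainly satisfies (\ref{youngbound}).

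\emph{Moderate range $\sqrt{3}/2 \leq y \leq T$.} Here I would re-run Young's proof \cite{Yo18} with $y$ kept as a free parameter. Young's argument starts from an approximate functional equation applied to a product of two shifted Riemann zetas and reduces the sup-norm problem to the estimation of certain exponential sums by stationary phase. The variable $y$ enters his analysis only through the Whittaker component $\sqrt{y}\, K_{it}(2\pi n y)$ of the Fourier expansion; in the dominant transition regime $n y \asymp |t|$ one has $K_{it}(2\pi n y) \ll |t|^{-1/3}$. The expectation is that carrying $y$ through the stationary-phase estimates shows that the $y$-dependence factors out cleanly as $y^{1/2}$, leaving Young's original $T^{3/8+\eps}$ bound intact.

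\emph{Main obstacle.} The principal technical step is the bookkeeping in the moderate range: one must verify that $y$ never enters Young's stationary-phase estimates in a way that is not already captured by the explicit $y^{1/2}$ prefactor. This requires re-examining each of the estimates in \cite{Yo18} with $y$ treated as a genuine parameter rather than a bounded quantity, ensuring that no spurious positive power of $y$ is picked up in the phase analysis or in the resulting sum bounds.
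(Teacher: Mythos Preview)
Your high-level strategy---split at a threshold in $y$, use the Fourier expansion in the cusp and a $y$-explicit version of Young's bound below---is the same as the paper's. However, your description of Young's method is wrong, and this matters for the ``moderate range'' step you flag as the main obstacle. Young \cite{Yo18} does not use an approximate functional equation for shifted zeta products followed by stationary phase on the Fourier expansion; he uses a variant of the Iwaniec--Sarnak amplification method (an amplified pretrace inequality). Consequently $y$ does not enter ``only through the Whittaker component $\sqrt{y}\,K_{it}(2\pi ny)$'' as you claim; it enters through the geometric side, specifically the lattice-point count in \cite[Lemma~5.1]{Yo18}. The bookkeeping you propose to carry out would therefore not apply to the actual structure of the proof in \cite{Yo18}.

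The paper sidesteps this entirely by noting that no re-running is needed: Young already records an inequality with explicit $y$-dependence, \cite[(6.3)]{Yo18},
\[
|E(z,1/2+it)|^2 \ll_\eps (N|t|)^\eps \Bigl(\tfrac{|t|}{N} + |t|^{1/2}\bigl(N + N^{1/2}y\bigr)\Bigr),
\]
valid for $y \ll |t|^{100}$ and $\log N \gg (\log |t|)^{2/3+\delta}$. Choosing $N = |t|^{1/4}$ gives $|E| \ll_\eps y^{1/2}|t|^{3/8+\eps}$ for $1 \ll y \ll |t|^{1/4}$, and the trivial Fourier-expansion bound $|E| \ll_\eps y^{1/2} + |t|^{3/8+\eps}$ covers $y \gg |t|^{1/4}$. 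The natural threshold is thus $|t|^{1/4}$, not $|t|$, and the whole argument reduces to quoting one displayed inequality and optimising a single parameter.
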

\begin{remark}
Huang and Xu \cite[Theorem 1.1]{HuangXu17} obtained the slightly stronger bound $E(z,s)\ll_\eps y^{1/2}+|t|^{3/8+\eps}$.
\end{remark}

It turns out however to be a much more delicate task to upgrade Blomer's Weyl type estimate to a uniform one, which is the main technical contribution of this paper. Our result is the following.
\begin{thm} \label{mainthm}
For $z\in \mathcal{F}$, the standard fundamental domain (\ref{F}) for $\Gamma_0(1)$, we have
\begin{equation}  \label{mainbound} E(z, 1/2+it) \ll_\eps y^{1/2} (|t|+1)^{1/3+\eps},
\end{equation}
for any $\eps>0$.
\end{thm}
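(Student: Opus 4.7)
The plan is to work directly with the Fourier expansion
$$E(z, 1/2+it) = y^{1/2+it} + \varphi(1/2+it) y^{1/2-it} + \frac{2\pi^{1/2+it} y^{1/2}}{\Gamma(1/2+it)\zeta(1+2it)} \sum_{n \neq 0} |n|^{it}\sigma_{-2it}(|n|) K_{it}(2\pi |n| y) e(nx),$$
and to track the $y$-dependence through Blomer's exponential-sum method. The zeroth Fourier coefficient contributes $O(y^{1/2})$ since the scattering entry satisfies $|\varphi(1/2+it)|=1$, so the substantive task is the Whittaker sum. Writing $T=|t|+1$ and invoking Stirling together with $|\zeta(1+2it)|\gg T^{-\eps}$, the prefactor has absolute value $\asymp y^{1/2} e^{\pi T/2} T^{o(1)}$. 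It therefore suffices to prove
$$\sum_{n \geq 1} d(n)\, K_{it}(2\pi n y)\, e(nx) \ll e^{-\pi T/2}\, T^{1/3+\eps}$$
uniformly in $y \geq \sqrt{3}/2$ and $x \in \R$.

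Next I would split this inner sum according to the size of $2\pi n y$ relative to $T$. In the range $2\pi ny \geq T+T^{1/3+\eps}$ the Bessel function decays exponentially and contributes negligibly. In the transition range $|2\pi n y - T| \leq T^{1/3+\eps}$ there are $O(T^{1/3}/y)$ terms, and the uniform Airy-type bound $|K_{it}(X)| \ll T^{-1/3}e^{-\pi T/2}$ produces a contribution $\ll e^{-\pi T/2} T^\eps/y$, comfortably within budget since $y \geq \sqrt{3}/2$. The substantive analysis is thereby confined to the oscillatory range $2\pi n y \leq T - T^{1/3+\eps}$.

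In this range I would apply a uniform asymptotic expansion of $K_{it}(X)$ (of Olver type) to write
$$K_{it}(2\pi n y) = e^{-\pi T/2}\bigl(T^2-4\pi^2 n^2 y^2\bigr)^{-1/4}\bigl(W_+(n;y,t)e^{i\Phi(n;y,t)} + W_-(n;y,t)e^{-i\Phi(n;y,t)}\bigr) + (\text{lower order}),$$
with smooth amplitudes $W_\pm$ and an explicit $y$-dependent phase $\Phi$. Following Blomer's adaptation of the Titchmarsh method, I would then factor $d(n)=\sum_{ab=n}1$, decompose dyadically into ranges $a\asymp A$, $b\asymp B$ with $AB\asymp N\ll T/y$, apply Poisson summation in the longer variable, evaluate the resulting smooth integral by stationary phase, and finish by van der Corput's cubic ($B$-process) estimate on the resulting one-dimensional exponential sum. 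After accounting for the $(T^2-4\pi^2n^2y^2)^{-1/4}$ amplitude and dyadic summation over $A,B$, the output should be the target bound $e^{-\pi T/2}\,T^{1/3+\eps}$.

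The main obstacle is to keep every step uniform in $y$: the stationary-phase asymptotics for $K_{it}$, the bounds on the derivatives of $\Phi$ in the active variables, and the van der Corput estimate on the $y$-dependent post-Poisson phase must each remain consistent across the whole range $\sqrt{3}/2 \leq y \ll T$. The danger is that when $y \asymp T^{1-\alpha}$ the effective sum length shrinks to $T^\alpha$ and the stationary-phase geometry rescales, so naively transporting Blomer's bounds can leave a spurious power of $y$. I expect that careful bookkeeping shows the $y$-dependent factors inside the inner sum cancel exactly, so that the only surviving $y$-dependence is the clean factor $y^{1/2}$ coming from the Fourier expansion prefactor, giving the desired uniform bound.
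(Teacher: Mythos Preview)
Your route is genuinely different from the paper's, and your description of ``Blomer's adaptation of the Titchmarsh method'' mislocates that method. The paper never touches the Fourier expansion: it writes $\zeta(2s)E(z,s)=y^{s}E_{\text{Epstein}}(Z,s)$ with $Z=\begin{psmallmatrix}x^2+y^2 & x\\ x & 1\end{psmallmatrix}$, applies a balanced approximate functional equation to the Epstein zeta function, and reduces via two-dimensional partial summation to bounding $Q(\X)^{-1/2}\sum_{\x} e^{it\log Q(\x)}$ over dyadic boxes in $\Z^2$. These two-variable exponential sums are then treated by Titchmarsh's two-dimensional Weyl differencing (his Lemma~$\beta$), followed by sum-to-integral (Lemma~$\gamma$) and a two-dimensional second-derivative test (Lemma~$\epsilon$). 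That \emph{is} Blomer's method in \cite{Bl18}; it is not a Fourier-expansion argument with Poisson in one variable and a one-dimensional van der Corput step.

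More seriously, your sketch leaves as an expectation the one thing the theorem asserts beyond Blomer's result. Blomer already has $E(z,1/2+it)\ll_\eps (|t|+1)^{1/3+\eps}$ for $z$ in a fixed compact set; the entire new content of Theorem~\ref{mainthm} is the uniformity in $y$. You correctly flag this as ``the main obstacle'' and then close with ``I expect that careful bookkeeping shows the $y$-dependent factors \ldots\ cancel exactly''. In the paper this is where all the work lies: the sub-rectangle side-lengths $l_1,l_2$ in (\ref{l}) are chosen with different powers of $a\asymp y^2$, an auxiliary rescaling $(y_1,y_2)=(a^{1/4}x_1,a^{-1/4}x_2)$ is needed to bring all second derivatives to a common scale before the case analysis of Lemma~\ref{lemmatomainprop}, and the derivative and Hessian bounds of Lemma~\ref{hessbound} must be proved with explicit $a$-dependence. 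In your Fourier-expansion route the analogous danger is real rather than cosmetic: the post-Bessel phase has $y$-dependent curvature, the effective length is $T/y$, and the stationary-phase constants after Poisson carry powers of $y$. Getting these to collapse to a bare $T^{1/3+\eps}$ with no residual $y$-power is the theorem, and your proposal does not carry this out.
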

Combining this bound with Theorem \ref{translationthm}, we arrive at the following.
\begin{cor} Let $K/\Q$ be a quadratic extension (real or imaginary) of discriminant $D$ and $\chi$ a (wide) class group character of $K$. Then 
\begin{equation}\label{L-bound} L_K(1/2+it, \chi )\ll_{\eps} |D|^{1/4+\eps}\,  (|t|+1)^{1/3+\eps}, \end{equation}
and
\begin{align}\label{sqsum2}\sum_{\chi \in \widehat{\Cl (K)}} |L_K(1/2+it, \chi )|^2\ll_{\eps} |D|^{1/2+\eps}\,  (|t|+1)^{2/3+\eps},  \end{align}
for any $\eps>0$.
\end{cor}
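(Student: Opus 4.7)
The corollary is a direct application of the two main results assembled in the preceding pages, so the proof plan is essentially a bookkeeping exercise rather than a new argument.

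First I would verify that Theorem \ref{mainthm} supplies exactly the hypothesis required to invoke Theorem \ref{translationthm}. Indeed, Theorem \ref{mainthm} states that
\[ E(z, 1/2+it) \ll_\eps y^{1/2} (|t|+1)^{1/3+\eps} \]
uniformly for $z \in \mathcal{F}$, which is precisely a bound of the form (\ref{bound1}) with $\delta = 1/2$ (which lies in the admissible range $1/2 \leq \delta \leq 1$) and $\theta = 1/3 + \eps$. The only mild subtlety is that Theorem \ref{translationthm} is stated with a fixed $\theta$, whereas we have $\theta = 1/3 + \eps$; this is harmless because $\eps > 0$ is arbitrary and the conclusions of Theorem \ref{translationthm} already absorb an additional $\eps$ in the exponent.

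Next I would substitute these values into the two conclusions of Theorem \ref{translationthm}. Plugging $\theta = 1/3$ into (\ref{singleL}) yields
\[ L_K(1/2+it, \chi) \ll_\eps |D|^{1/4+\eps}(|t|+1)^{1/3+\eps}, \]
which is (\ref{L-bound}). Plugging $\delta = 1/2$ and $\theta = 1/3$ into (\ref{squareL}) yields
\[ \sum_{\chi \in \widehat{\Cl(K)}} |L_K(1/2+it, \chi)|^2 \ll_\eps |D|^{1/2+\eps}(|t|+1)^{2/3+\eps}, \]
which is (\ref{sqsum2}). Both cases (real and imaginary quadratic) are covered simultaneously since Theorem \ref{translationthm} already handles both.

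There is no genuine obstacle at this stage: all the substantive work has been pushed into Theorem \ref{translationthm} (which encapsulates Hecke's formula (\ref{heegnersum}) and its real-quadratic analogue, together with the summation over Heegner points or integration over closed geodesics) and into Theorem \ref{mainthm} (the uniform Weyl-type sup norm bound extending Blomer). The only thing to double-check is that the $y^{1/2}$ factor in the sup norm bound does not cause any loss when summed against the Heegner points, but this is already accounted for in the formulation of Theorem \ref{translationthm}, which is precisely why the exponent $\delta$ appears as $|D|^\delta$ in the second moment and is absent (replaced by the trivial $|D|^{1/4}$) in the single $L$-function bound.
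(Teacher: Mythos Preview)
Your proposal is correct and matches the paper's own treatment exactly: the paper simply says ``Combining this bound with Theorem~\ref{translationthm}, we arrive at the following,'' i.e.\ it plugs the sup norm estimate of Theorem~\ref{mainthm} (giving $\delta=1/2$, $\theta=1/3+\eps$) into the two conclusions of Theorem~\ref{translationthm}. There is nothing to add.
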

\begin{remark} Observe that for imaginary quadratic fields, (\ref{sqsum2}) corresponds to Lindelöf on average in the $D$-aspect, since $h(K)\gg |D|^{1/2-\eps}$. On the other hand if $K/\Q$ is a real quadratic fields with class number 1, (\ref{sqsum2}) just recovers (\ref{L-bound}).
\end{remark}
\begin{remark}
As mentioned above it has been conjectured \cite[Conjecture 3.10]{Sa95} that the following should hold for all $\eps >0$; 
 \begin{align} \label{conjecture4} \sup_{z\in C} |E(z,1/2+it)| \ll_{\eps,C} (|t|+1)^\eps, \end{align}
 where $C\subset \H$ is a compact set. This implies the Lindelöf hypothesis in the $t$-aspect for the class group $L$-function. In the last section we will speculate what the uniform analogue of (\ref{conjecture4}) should be.  \end{remark}
\subsubsection{Hybrid subconvexity bounds for class group $L$-functions}The first to obtain subconvexity for class group $L$-functions seems to be S\"{o}hne \cite{Soehne97} in the $t$-aspect and Duke, Friedlaner and Iwaniec \cite{DuFrIw02} in the $D$-aspect (which was then improved numerically by Blomer, Harcos and Michel \cite{BlHaMi07}). The first to achieve subconvexity in both aspects simultaneously (with an unspecified exponent) was Michel and Venkatesh \cite{MichelVenk10} as a consequence of their solution of the subconvexity problem for $\GL_2$ automorphic $L$-functions (for general number fields). The results of Michel and Venkatesh were then later made explicit by Wu \cite{HuAndersen18}. More precisely \cite[Corollary 1.4]{HuAndersen18} states that if $\pi$ is an automorphic representation of $\GL_2(\A_\Q)$ with (unitary) central character $\omega$, then we have
\begin{align}\label{Huesub}L(\pi,1/2)\ll \fatC(\pi)^{1/4}\left( \frac{\fatC(\pi)}{\fatC(\omega)}\right)^{-\frac{1-2\theta}{40}}\fatC(\omega)^{-1/160},\end{align}
where ${\bf C}(\pi),{\bf C}(\omega)$ denote the analytic conductors of respectively $\pi,\omega$ and $\theta$ is any approximation towards the Ramanujan--Petersson conjecture. Let us briefly explain how to extract a subconvexity bound for class group $L$-functions from (\ref{Huesub}). \\
Let $\chi$ be a (wide) class group character of the quadratic extension $K/\Q$ of conductor $D$, $\theta_\chi\in \mathcal{M}_1(\Gamma_0(|D|), \chi_D)$ the theta series associated to $\chi$ (see \cite[Section 14.3]{IwKo}) and $\pi_\chi$ the corresponding automorphic representation of $\GL_2(\A_\Q)$. The analytic conductor of the automorphic representation $\pi_\chi \otimes |\cdot|_{\A_\Q}^{it}$ is given by $D(|t|+1)^2$ and the same is true for the analytic conductor of its central character. By plugging this into (\ref{Huesub}) above, we thus get
$$L(\pi_\chi \otimes |\cdot|_{\A_\Q}^{it},1/2)=L_K(1/2+it, \chi )\ll \left(|D|^{1/4}\,  (|t|+1)^{1/2}\right)^{1-1/40}, $$
which is the state of the art for hybrid subconvexity. We observe that the bound (\ref{L-bound}) improves on this in certain regimes of $t$ and $D$. Combining the result of Wu with ours, we arrive at the following improvement. 
\begin{cor}\label{hybridsub}
Let $K/\Q$ be a quadratic extension of discriminant $D$ and $\chi$ a (wide) class group character of $K$. Then we have
\begin{equation}\label{Lbound} L_K(1/2+it, \chi )\ll_\eps \begin{cases}|D|^{1/4+\eps}\,  (|t|+1)^{1/3+\eps},& \text{for } t> |D|^{3/74}\\
\left( |D|^{1/4}\,  (|t|+1)^{1/2}\right)^{1-1/40},& \text{for } t\leq |D|^{3/74} \end{cases}, \end{equation}
for any $\eps>0$.
\end{cor}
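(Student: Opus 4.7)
The statement is an immediate combination of two existing bounds, so the proof is essentially bookkeeping. My plan is as follows.

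First, I would invoke the bound \eqref{L-bound} from the previous corollary, which is obtained by plugging Theorem \ref{mainthm} into the translation result \eqref{singleL} of Theorem \ref{translationthm} (with $\delta=1/2$, $\theta=1/3$). This gives the first branch,
$$L_K(1/2+it,\chi)\ll_\eps |D|^{1/4+\eps}(|t|+1)^{1/3+\eps}.$$
Second, I would quote Wu's explicit hybrid subconvexity bound \eqref{Huesub}: applying it to the automorphic representation $\pi_\chi\otimes|\cdot|_{\A_\Q}^{it}$ of analytic conductor $|D|(|t|+1)^2$ (and noting that the central character has the same analytic conductor), the ratio $\fatC(\pi)/\fatC(\omega)$ is bounded, so the exponent simplifies to give
$$L_K(1/2+it,\chi)\ll \bigl(|D|^{1/4}(|t|+1)^{1/2}\bigr)^{1-1/40}.$$

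The remaining task is simply to determine in which regime each of the two bounds is smaller. Writing $T=|t|+1$ and comparing the two exponents, one finds that the first bound beats the second precisely when
$$|D|^{1/4}T^{1/3}\ \leq\ |D|^{39/160}T^{39/80},$$
that is, when $|D|^{1/160}\leq T^{37/240}$, equivalently $T\geq |D|^{3/74}$. Choosing the smaller of the two bounds in each regime yields exactly the piecewise estimate \eqref{Lbound}.

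There is no real obstacle here: both input bounds are already in hand, and the only nontrivial (but purely arithmetic) step is the crossover computation $1/160\cdot 240/37=3/74$. I would present it in two or three lines, noting in passing that our bound genuinely improves on Wu's whenever $(|t|+1)$ is at least a small power of $|D|$ — reflecting the fact that our improvement is in the $t$-aspect while the $D$-dependence is left untouched at the convexity exponent $1/4$.
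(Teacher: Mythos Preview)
Your proposal is correct and follows exactly the approach of the paper: the corollary is obtained by combining the author's bound \eqref{L-bound} with Wu's explicit hybrid bound \eqref{Huesub} applied to $\pi_\chi\otimes|\cdot|_{\A_\Q}^{it}$, and the crossover computation $T\geq |D|^{3/74}$ is precisely the arithmetic you carry out. The paper does not give a separate formal proof of this corollary; it simply states (in the paragraph preceding the statement) that combining the two bounds yields the result, so your write-up is if anything more detailed than the paper's own treatment.
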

\begin{remark}
The state of the art hybrid subconvexity bound for $\GL_1$ automorphic $L$-functions \cite[Corollary 1.2]{Wu19} is very similar to the above; the best hybrid subconvexity bound is obtained by combining the results of Wu \cite{Wu19} and those of S\"{o}hne \cite{Soehne97}. Notice that the bounds obtained in these two papers depend on the number field and are thus not relevant in our hybrid setting.     
\end{remark}
\begin{remark} In the special case where $\chi$ is a genus character, we have the following factorization in terms of quadratic Dirichlet $L$-functions;
$$ L_K(s, \chi)= L(s,  \left(\tfrac{d_1}{\cdot}\right))L(s, \left(\tfrac{d_2}{\cdot}\right)),$$
where $\chi$ corresponds to the factorization $d_1 d_2=D$. In this case it follows from \cite[(1.8)]{Young17} that we have the following improvement on the above;
$$L_K(1/2+it, \chi)\ll_\eps |D|^{1/6+\eps}(|t|+1)^{1/3+\eps}.$$
\end{remark}

\section*{Acknowledgement}
I would like to express my gratitude to my advisor Morten Risager for suggesting this problem to me and for pointing me to \cite{Bl18} and \cite{HuAndersen18}, and to the referee for many useful comments, which enabled me to prove a stronger result.

\section{From sup norm bounds to subconvexity} 

In this section we will prove Theorem \ref{translationthm}. First of all we will introduce some background on quadratic fields and the formulas due to Hecke mentioned above. 

\subsection{Quadratic fields} We will now recall a few standard facts about quadratic fields and refer to \cite[Chapter 22]{IwKo}, \cite[Section 1]{Sarnak85} and \cite[Section 2]{DuImTo} for more background.\\
Let $K/\Q$ be a quadratic extension of number fields, then we can write $K=\Q[\sqrt{D}]$ where $D$ is the discriminant of $K$. We denote by $\Cl (K)$ the class group of $K$ consisting of classes of fractional ideals modulo principal ideals. According to Gauss each fractional ideal class $\mathfrak{a}$ corresponds to an equivalence class of integral binary quadratic forms of discriminant $D$ modulo integral linear transformations. When $D<0$ we can to each $\mathfrak{a}\in\Cl (K)$ associate a Heegner point on the modular curve given by;
$$ z_\mathfrak{a}:=\frac{-b+i\sqrt{|D|}}{2a}\in X_0(1),$$
where $Q=aX^2+bXY+cY^2$ is any representative of $\mathfrak{a}$. We denote by $h(K)$ the size of the class group and we have the following (ineffective) bound due to Siegel; 
\begin{align}\label{Siegel} |D|^{1/2-\eps}\ll_\eps h(K)\ll_\eps |D|^{1/2+\eps}.\end{align}

When $D> 0$, we can analogously to any ideal class $\mathfrak{a}$ in the (wide) class group of $K$ associate a certain primitive, closed geodesic $C_\mathfrak{a}$ on $X_0(1)$. If $\mathfrak{a}$ corresponds to some integral binary quadratic form $Q=aX^2+bXY+cY^2$, then $C_\mathfrak{a}$ is defined as the projection onto $X_0(1)$ of a certain arc on the semi-circle $S_Q\subset \H$ defined by the end-points $\frac{-b\pm \sqrt{D}}{2a}$ (see the references above for the precise definition). The hyperbolic line element on $X_0(1)$ is given by $|ds|= |dz|/y$ and $C_\mathfrak{a}$ has hyperbolic length $2\log \epsilon_K$, where $\epsilon_K$ is the fundamental unit of $K$. Similar to the imaginary quadratic case we have the (ineffective) bound;
\begin{align}\label{Siegel2} |D|^{1/2-\eps}\ll_\eps h(K)\log \epsilon_K \ll_\eps |D|^{1/2+\eps}, \end{align}
also due to Siegel.

\subsection{Hecke's formula for class group $L$-functions} 
For a real or imaginary quadratic extension $K/\Q$ and a character $\chi$ of $\Cl (K)$, we associate the class group $L$-function absolutely convergent for $\Re s>1$;
\begin{align}
\label{classgroupL} L_K(s,\chi):= \sum_{\mathfrak{a}} \chi(\mathfrak{a}) N_K(\mathfrak{a})^{-s}=\prod_{\mathfrak{p}}(1-\chi(\mathfrak{p})N_K(\mathfrak{p})^{-s}),
\end{align}    
where $N_K$ is the norm and the sum runs over all integral ideals of $K$ and the product is taken over integral prime ideals of $K$. The class group $L$-functions admit analytic continuation and functional equations, which we will see shortly follows from the same properties for the non-holomorphic Eisenstein series.\\

The connection between class group $L$-functions and Eisenstein series is given by a beautiful formula due to Hecke. In the introduction we already mentioned that for imaginary quadratic extensions $K/\Q$, the formula reads \cite[(22.58)]{IwKo};   
\begin{align*}
 L_K(s, \chi)= \frac{2^{s+1} \zeta(2s) |D|^{-s/2}}{\omega_K } \sum_{\mathfrak{a}}\chi(\mathfrak{a}) E(z_{\mathfrak{a}},s),
 \end{align*}
where the sum runs over a complete set of representatives for the class group of $K$, $z_{\mathfrak{a}}$ is the associated Heegner point and $\omega_K\in\{2,4,6\}$ denotes the number of roots of unity in $K$.\\ 
For real quadratic fields, we have similarly the following formula \cite[(7.7)]{DuImTo};
\begin{align}
 \label{heegnerintegral} L_K(s, \chi)= \frac{\zeta(2s) D^{-s/2} \Gamma(s)}{\Gamma(s/2)^2} \sum_{\mathfrak{a}} \chi(\mathfrak{a})  \int_{C_\mathfrak{a}} E(z,s)y^{-1}|dz|.
 \end{align}
We observe that analytic continuation and functional equation for $L_K(s, \chi)$ now follows from the corresponding properties of the Eisenstein series \cite[Theorem 6.5]{Iw}. 

\subsection{Proof of Theorem \ref{translationthm}} 
In this section we will prove Theorem \ref{translationthm}. To do this we will need a lemma that bounds averages over Heegner points (resp. cycles) of the function $y:X_0(1)\rightarrow \R_+$ defined by $y(z):=\Im (z_\mathcal{F})$, where $z_\mathcal{F}\in \H$ is the representative of $z\in X_0(1)$ which lies in $\mathcal{F}$, the standard fundamental domain (\ref{F}) for $\Gamma_0(1)$. Observe that this function is continuous. 
\begin{lemma}\label{boundcongruence}
Let $K/ \Q$ be a quadratic field of discriminant $D$. Then we have for any $\delta>0$ and $\eps>0$;
$$  \sum_{\mathfrak{a}\in \Cl(K)} \begin{cases}   y(z_\mathfrak{a})^\delta & \text{\rm if } D<0,\\ \int_{C_\mathfrak{a}} y(z)^\delta \, |ds| &\text{\rm if } D>0,\end{cases}  \ll_\eps |D|^{\max(\delta, 1)/2+\eps}. $$
\end{lemma}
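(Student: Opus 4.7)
The plan is to treat both cases uniformly by invoking Hecke's formulas~\eqref{heegnersum} and~\eqref{heegnerintegral} with the trivial class-group character $\chi_0$, combined with the positivity of Eisenstein series at real arguments $\sigma > 1$. Specifically, the defining series $E(z,\sigma) = \sum_{\gamma \in \Gamma_\infty \backslash \Gamma} \Im(\gamma z)^\sigma$ consists of non-negative terms when $\sigma > 1$ is real, and the trivial coset contributes $y^\sigma$, so that $y^\sigma \leq E(z,\sigma)$ pointwise on $\H$.

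For $\delta > 1$ the argument is one step: Hecke's formula with $\chi = \chi_0$ expresses $\sum_\mathfrak{a} E(z_\mathfrak{a},\delta)$ (respectively $\sum_\mathfrak{a} \int_{C_\mathfrak{a}} E(z,\delta)\,|ds|$) as an explicit constant times $|D|^{\delta/2} \zeta_K(\delta)/\zeta(2\delta)$, and $\zeta_K(\delta) = \zeta(\delta) L(\delta, \chi_D) \ll_\delta 1$ is bounded for $\delta > 1$ fixed; combined with the pointwise bound, this yields $\ll |D|^{\delta/2+\eps}$. For $0 < \delta \leq 1$ I would use that $y \geq \sqrt{3}/2$ on $\mathcal{F}$, giving the elementary pointwise bound $y^\delta \leq 1 + y^{1+\eta}$ on $\mathcal{F}$ for any $\eta > 0$. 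The constant contribution sums to $O(h(K))$ or $O(h(K)\log\epsilon_K)$, both $\ll |D|^{1/2+\eps}$ by~\eqref{Siegel} and~\eqref{Siegel2}. The $y^{1+\eta}$ contribution is handled by the preceding step at $s = 1+\eta$: although $\zeta_K(s)$ has a simple pole at $s=1$ with residue $L(1,\chi_D) \ll \log|D|$, the total is bounded by $O(|D|^{(1+\eta)/2+\eps}/\eta)$, which is $\ll |D|^{1/2+\eps}$ once $\eta$ is chosen small enough in terms of $\eps$.

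The main obstacle I anticipate is controlling the simple pole of $\zeta_K(s)$ at $s=1$ as $\delta$ approaches $1$ from below; this is sidestepped by working slightly to the right of the pole and using the unconditional bound $L(1,\chi_D) \ll \log|D|$.
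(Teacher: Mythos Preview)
Your argument is correct and takes a genuinely different route from the paper. The paper proceeds by direct counting: for $D<0$ it parametrizes Heegner points by reduced forms so that $y(z_\mathfrak{a})=\sqrt{|D|}/(2a)$, and bounds $\sum_{a\ll\sqrt{|D|}}\rho_D(a)/a^\delta$ using the divisor-type estimate $\rho_D(a)\ll_\eps a^\eps$; for $D>0$ it unfolds the geodesic integral over $\Gamma_\infty\backslash\Omega_D$ and argues analogously. Your approach instead exploits the termwise positivity $y(z)^\sigma\le E(z,\sigma)$ for real $\sigma>1$ and then evaluates the Heegner sum (or cycle integral) of $E(\cdot,\sigma)$ exactly via Hecke's formulas \eqref{heegnersum}, \eqref{heegnerintegral} with $\chi=\chi_0$, reducing everything to a bound on $\zeta_K(\sigma)=\zeta(\sigma)L(\sigma,\chi_D)$ near $\sigma=1^+$.

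What each approach buys: your method is slicker and treats $D<0$ and $D>0$ uniformly, and it nicely recycles the very formulas the lemma is designed to feed into (at real $\sigma>1$ rather than on the critical line). The paper's counting argument, on the other hand, is completely elementary and yields an \emph{effective} constant: as written, your treatment of $0<\delta\le 1$ invokes the Siegel bounds \eqref{Siegel}, \eqref{Siegel2} for the ``$+1$'' contribution, which are ineffective. You can sidestep this entirely by using $y^\delta\ll_{\delta,\eta} y^{1+\eta}$ directly on $\mathcal{F}$ (since $y\ge\sqrt{3}/2$), dropping the constant term; then only the estimate $\zeta_K(1+\eta)\ll \eta^{-1}\log|D|$ is needed, which follows from $\zeta(1+\eta)\ll 1/\eta$ together with the elementary uniform bound $L(1+\eta,\chi_D)\ll\log|D|$.
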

\begin{proof}
Assume $D<0$. The representative of $z_\mathfrak{a}\in X_0(1)$ which lies in $\mathcal{F}$, is exactly given by
$$ (z_\mathfrak{a})_\mathcal{F}=\frac{-b+i\sqrt{|D|}}{2a},  $$
where the integral binary quadratic form $aX^2+bXY+cY^2$ of discriminant $D$ corresponds to $\mathfrak{a}$ and $(a,b,c)$ is reduced \cite[(22.12)]{IwKo}, meaning that;
$$  -a<b\leq a\leq c\quad \text{or} \quad -a\leq b\leq a=c.  $$
Since $\mathcal{F}\subset \{z\in \H\mid \Im z\geq \sqrt{3}/2\}$, we conclude that $a\ll \sqrt{|D|}$ and thus we get;
\begin{align*}  
\sum_{\mathfrak{a}\in \Cl(K)} y(z_\mathfrak{a})^\delta =& |D|^{\delta/2}\sum_{a>0} \frac{\# \{a,b,c\mid b^2-4ac=D, (a,b,c)\text{ reduced} \}}{(2a)^\delta}\\
\ll& |D|^{\delta/2}\sum_{0<a\ll |D|^{1/2}}   \frac{\rho_D(a)}{a^\delta},
\end{align*}
where $\rho_D(a)=\# \{ 0<b\leq 2a \mid b^2\equiv D\modulo 4a\}$. It is well-known \cite[p. 521]{IwKo} that $\rho_D$ is multiplicative with $\rho_D(p^\alpha)=1+\chi_D(p)$ if $p\not| D$, $\rho_D(p)=1$ if $p|D$ and $\rho_D(p^\alpha)=0$ if $p|D$, $\alpha>1$, which implies the bound $\rho_D(a)\ll \sum_{d|a}1 \ll_\eps a^{\eps} $. Thus we conclude that
$$ \sum_{\mathfrak{a}\in \Cl(K)} y(z_\mathfrak{a})^\delta \ll_\eps |D|^{\delta/2} \sum_{0<a\ll \sqrt{|D|}} \frac{a^\eps}{a^\delta}\ll |D|^{1/2\max(\delta,1)+\eps}, $$
as wanted.\\

Now we turn to the case $D>0$. We denote by $\Omega_D$ all integral binary quadratic forms of discriminant $D$ and for $Q=aX^2+bXY+cY^2\in \Omega_D$, we denote by $S_Q$ the semi-circle in $\H$ with end-points $\frac{-b\pm\sqrt{D}}{2a}$. Then it follows from an easy lemma \cite[Lemma 6]{DuImTo11} (observe that they use a different looking but equivalent measure) that;
\begin{align} 
\label{furtherequal}\sum_{\mathfrak{a}\in \Cl(K)} \int_{C_\mathfrak{a}} y(z)^\delta \, |ds|=\sum_{Q\in \Omega_D}  \int_{S_Q \cap \mathcal{F}} y(z)^\delta \, |ds|,
\end{align}
where $\mathcal{F}$ is the standard fundamental domain (\ref{F}) for $\Gamma_0(1)$.\\ 
Now we take the quotient from the left by $\Gamma_\infty=\langle T \rangle$ where $T=\begin{psmallmatrix} 1&1\\0&1\end{psmallmatrix}$, which rewrites (\ref{furtherequal}) as the following;
\begin{align}\label{finfty}\sum_{[Q]\in \Gamma_\infty\backslash\Omega_D}  \int_{S_Q \cap \mathcal{F}^{(\infty)}} y(z)^\delta\,  |ds|,\end{align}
where $\mathcal{F}^{(\infty)}:=\cup_{n\in \Z} T^{(n)}\mathcal{F}$ is the union of all horizontal translates of $\mathcal{F}$ (notice that the integral above does not depend on the choice of $Q$). Since $\mathcal{F}^{(\infty)}\subset \{z\in \H\mid \Im z\geq \sqrt{3}/2\}$, we only get contributions in (\ref{finfty}) from quadratic forms $Q=aX^2+bXY+cY^2$ with $a\ll \sqrt{D}$ and furthermore we can pick representatives of $\Gamma_\infty\backslash\Omega_D$ satisfying $|b|\leq 2a$. Now we recall that $|ds|=y^{-1}|dz|$ and use the trivial fact that the Euclidean circumference of $S_Q$ is $\ll \frac{D^{1/2}}{a}$, which implies;
\begin{align*}
\sum_{[Q]\in \Gamma_\infty\backslash\Omega_D}  \int_{S_Q \cap \mathcal{F}^{(\infty)}} y(z)^\delta\, |ds|&= \sum_{0<a\ll D^{1/2}}  \sum_{\substack{[Q]\in \Gamma_\infty\backslash\Omega_D,\\ Q(1,0)=a}}\int_{S_{Q}\cap \mathcal{F}^{(\infty)}} y(z)^{\delta-1} |dz|\\
&\ll \sum_{0<a\ll D^{1/2}}  \sum_{\substack{[Q]\in \Gamma_\infty\backslash\Omega_D,\\ Q(1,0)=a}} \frac{D^{1/2}}{a}\left( \max_{z\in S_{Q}\cap \mathcal{F}^{(\infty)}} y(z)^{\delta-1}\right)\\
 &\ll D^{1/2+\max(\delta-1,0)/2}\sum_{0<a\ll D^{1/2}}  \frac{\rho_D(a)}{a}.
\end{align*}
Now the conclusion follows exactly as in the case of negative $D$ using the bound $\rho_D(a)\ll_\eps a^\eps$ (which also holds for $D>0$ by the above).
\end{proof}

Now we are ready to prove Theorem \ref{translationthm}.
\begin{proof}[Proof of Theorem \ref{translationthm}]
Consider the case $D<0$. By feeding (\ref{bound1}) into (\ref{heegnersum}), we see that
\begin{align}
L_K(1/2+it,\chi ) \ll_\eps  \frac{(|t|+1)^\eps}{|D|^{1/4}} \sum_{\mathfrak{a}}  y(z_\mathfrak{a})^{\delta} (|t|+1)^{\theta},
\end{align}
where we used some standard estimates for $\zeta$ on $\Re s=1$. \\
Now since we assumed $\delta\leq 1$, it follows from Lemma \ref{boundcongruence} that
$$L_K(1/2+it,\chi) \ll_{\eps} |D|^{1/4+\eps}  (|t|+1)^{\theta+\eps}.$$
as wanted. \\

To prove (\ref{squareL}), we observe that by orthogonality, the formula (\ref{heegnersum}) implies that 
$$ \sum_\chi |L_K(1/2+it, \chi)|^2   = \frac{8 h(K) |\zeta(1+2it)|^2}{\omega_K^2 |D|^{1/2}}\sum_\mathfrak{a} |E(z_\mathfrak{a},1/2+it)|^2.  $$
Thus by the assumption (\ref{bound1}), Siegel's bound (\ref{Siegel}) and standard estimates for the zeta function, we get
$$  \sum_\chi |L_K(1/2+it, \chi)|^2 \ll_\eps (|t|+1)^{2\theta+\eps} |D|^\eps \sum_\mathfrak{a}y(z_\mathfrak{a})^{2\delta},$$
and the result follows directly from Lemma \ref{boundcongruence}.\\

The proof of (\ref{singleL}) for $D$ positive is exactly the same using Lemma \ref{boundcongruence} and Hecke's formula (\ref{heegnerintegral}) in the case $D>0$. \\
In order to prove (\ref{squareL}), we use orthogonality as above to get
\begin{align*}
\sum_\chi |L_K(1/2+it, \chi)|^2 \ll_\eps  (|t|+1)^{2\theta+\eps} \frac{h(K)}{D^{1/2}} \sum_\mathfrak{a}\left |\int_{C_\mathfrak{a}} y(z)^\delta |ds|\right|^2 .
\end{align*}
Now we apply Cauchy-Schwarz to bound the above by 
$$ (|t|+1)^{2\theta+\eps} \frac{h(K)\log \epsilon_K}{D^{1/2}} \sum_\mathfrak{a} \int_{C_\mathfrak{a}} y(z)^{2\delta}|ds|,  $$
and the results follows from Lemma \ref{boundcongruence} and Siegel's bound (\ref{Siegel2}).
 \end{proof}

\begin{remark} If one believes the sup norm conjecture (\ref{conjecture4}), Theorem \ref{translationthm} tells you in particular that the cancellations in individual Eisenstein series are strong enough to give the Lindelöf hypothesis for class group $L$-functions in the $t$-aspect. It is however conjectured that (\ref{eissup}) holds for eigenfunctions on any hyperbolic surface \cite[Conjecture 3.10]{Sa95}. So in some sense the $t$-aspect is not essentially arithmetic. This method is however not able to give subconvexity estimates in the $D$-aspect for individual $L$-functions. This is due to the fact that the sup norm bounds do not \lq \lq see{\rq \rq} the arithmetics of the Heegner points (it is uniform for $z$ in a fixed compact set) and the cancellation between Eisenstein series evaluated at the different Heegner points is exactly what gives rise to subconvexity behavior in the $D$-aspect. In the last section (see (\ref{conjecture})), we will state a uniform analogue of the conjecture (\ref{eissup}), which using (\ref{squareL}) does give Lindelöf on average in the $D$-aspect for imaginary quadratic fields.\end{remark}

\section{Uniform sup norm bounds of Eisenstein series} 

In this section we will prove the hybrid bound (\ref{youngbound}) and (\ref{mainbound}) for the classical Eisenstein series. The proof of (\ref{youngbound}) follows directly from \cite{Yo18}. The proof of (\ref{mainbound}) requires much more work and is an adaptation (and elaboration) of the argument in \cite{Bl18} building on \cite{Ti34}, which in turn is an extension of the van der Corput method \cite[Section 8.3]{IwKo}.
 
 \subsection{Uniform bounds for Eisenstein series following Young}\label{extyoung} 
In \cite{Yo18} Young extends the method used by Iwaniec and Sarnak in \cite{IwSa} to give the first non-trivial result towards the sup norm conjecture for the modular curve. The main insight of Young was that one can choose a more efficient mollifier, which improves the bound for the continuous spectrum. The method of Iwaniec and Sarnak embeds respectively the cusp form and Eisenstein series into the entire spectrum of the modular curve. Then an application of the Selberg trace formula (with a carefully chosen test function) reduces the sup norm bound to a bound of the geometric side, which can be done with elementary means. The action of the Hecke operators plays a crucial role in the argument.\\
In \cite{Yo18} the sup norm bound is stated as a bound in the $t$-aspect with $z$ in a fixed compact set, but as Young also mentions the method yields something slightly stronger (this was also observed by Huang and Xu \cite[p. 2]{HuangXu17}).\\ 
The main inequality in Young's paper is \cite[(6.3)]{Yo18}, which gives
\begin{align}
|E(z, 1/2+it)|^2 \ll_\eps (N|t|)^\eps \left( \frac{|t|}{N}+|t|^{1/2}(N+N^{1/2}y) \right),
\end{align}
where $N$ is some parameter to be chosen appropriately. By inspecting \cite[Lemma 4.1, Lemma 5.1]{Yo18} one sees that the restrictions on the variables are $\log N\gg (\log t)^{2/3+\delta}$ for some fixed $\delta>0$ and $y\ll |t|^{100}$. In particular in the range $y\ll |t|^{1/4}$, we can put $N=|t|^{1/4}$ and get
$$  |E(z, 1/2+it)|^2 \ll_\eps |t|^{3/4+\eps}+ |t|^{3/4+\eps}+ |t|^{5/8+\eps}y.   $$
From this we conclude
$$ |E(z, 1/2+it)| \ll_\eps  y^{1/2} |t|^{3/8+\eps} , \quad 1\ll y\ll |t|^{1/4}.$$
In the range $y\gg |t|^{1/4}$, we have the trivial bound \cite[(3.2)]{Yo18}, which yields
$$    |E(z, 1/2+it)|\ll_\eps y^{1/2}+ |t|^{3/8+\eps}. $$
Combining the two, concludes the proof of Theorem \ref{young}.

\subsection{Titchmarsh's method for bounding Epstein zeta functions}  
 Now we turn to the proof of Theorem \ref{mainthm}. The following serves first of all as an extension of Blomer and Titchmarsh's work but secondly as an elaboration of some of the details, which are left out in \cite{Bl18}. The approach expresses the non-holomorphic Eisenstein series in terms of an {\it Epstein zeta function}, which is then  bounded using the van der Corput method from the theory of exponential sums. Originally Titchmarsh considered only Epstein zeta functions associated to diagonal matrices and there are some technical difficulties to deal with general Epstein zeta functions. Furthermore in order to get a bound uniform in the entries of the matrix defining the Epstein zeta function, one has to modify parts of the argument.\\
 
Given any positive definite matrix $Z\in \GL_2(\R)$, we can consider the quadratic form $Q(\x)=\x \, Z\, \x ^T$, $\x=(x_1,x_2)\in \R^2$ and the associated Epstein zeta function
$$ E_{\text{\rm Epstein}}(Z, s):= \sum_{ \x \in \Z^2 \backslash (0,0)} Q(\x)^{-s}, $$
which satisfies the functional equation
$$ \Gamma_\R(2s)E_{\text{\rm Epstein}}(Z, s)= (\det Z)^{-1/2} \Gamma_\R(2(1-s))E_{\text{\rm Epstein}}(Z^{-1},1-s),$$
where $\Gamma_\R (s):=\pi^{-s/2} \Gamma(s/2)$.\\
Recall that this is related to the non-holomorphic Eisenstein series as follows 
\begin{equation} \zeta(2s)E(z,s)= y^s E_{\text{\rm Epstein}}(Z,s),\qquad Z=\begin{pmatrix}x^2 +y^2 & x \\ x & 1 \end{pmatrix}, \end{equation}
which reduces the sup norm problem for Eisenstein series to bounding the Epstein zeta function. We may restrict to the case where $z\in \mathcal{F}$, the standard fundamental domain (\ref{F}) for $X_0(1)$, which corresponds to considering only matrices of the form
$$ Z=\begin{pmatrix} a & b \\ b & 1 \end{pmatrix},$$ 
where $a\geq 1$ and $|b|\leq 1/2$.\\
The trivial estimate \cite[(3.2)]{Yo18};
$$E(z,1/2+it)\ll y^{1/2}+(t/y)^{1/2}$$
yields (\ref{mainbound}) in the range $|t|^{1/6}\ll y$ and thus in the sequel we may assume $a\ll |t|^{1/3}$ and thus also $|t|\gg 1$.

\subsection{Reduction to an exponential sum} 
As in \cite{Bl18} we start by applying an approximate functional equation \cite[Theorem 5.3]{IwKo} with $G(u)=e^{u^2}$, but deviate slightly by using a balanced version (corresponding to putting $X=a^{1/2}$ in \cite[Theorem 5.3]{IwKo}). By estimating the contribution coming from the pole of $E_{\text{\rm Epstein}}(Z,s)$ at $s=1$ trivially, the approximate functional equation yields
\begin{align}\nonumber E_{\text{\rm Epstein}}(Z, 1/2+it)&= \sum_{{\bf x}\neq 0} \frac{W^+_t(Q_+({\bf x})a^{-1/2})}{Q_+({\bf x})^{1/2+it}}\\
\label{approx}& + \frac{\Gamma_\R (1-2it)}{\Gamma_\R(1+2it) (\det Z)^{1/2}} \sum_{{\bf x}\neq 0} \frac{W^{-}_t(Q_-({\bf x})a^{1/2})}{Q_-({\bf x})^{1/2-it}} +O(1)\end{align}
where $Q_\pm({\bf x})={\bf x}\, Z^{\pm 1} \, {\bf x}^T$ and 
$$ W^{\pm}_t(y)=\frac{1}{2\pi i}\int_{(1)} e^{u^2} \frac{\Gamma_\R(2(u+1/2\pm it))}{\Gamma_\R(2(1/2\pm it))} y^{-u}\frac{du}{u}.$$ 
The weight $W^\pm_t$ can be nicely bounded as follows; we move the contour to the line $(A)$ with $A>0$ and bound the integrand using Stirling's approximation as follows;
\begin{align*}  e^{u^2/2}\frac{\Gamma_\R(2(u+1/2\pm it))}{\Gamma_\R(2(1/2\pm it))} u^{-1}\ll \frac{e^{A^2/2}e^{-b^2/2} \pi^{-A/2}e^{-A} (|t|^A + (b+A)^A)}{A+|b|} \ll_A |t|^A ,
\end{align*}
with $u=A+ib$ using that $e^{-b^2/2}  (b+A)^A \rightarrow 0$ as $b\rightarrow \infty$. Thus we get the bound
$$ W^\pm_t(y)\ll_A |t|^A/y^A \int_{-\infty}^\infty e^{-x^2/2} dx \ll |t|^A/y^A, $$
and more generally one deduces $\frac{\partial^n }{\partial y^n}W^\pm_t(y)\ll_A |t|^A/y^{A+n}$ as in \cite[Proposition 5.4]{IwKo}. From this we see that the contributions in (\ref{approx}) from $\x$ such that $Q_\pm(\x)\gg a^{\pm 1/2} |t|^{1+\eps}$ are negligible. \\

To deal with the remaining sums in (\ref{approx}), we divide the range of summation into dyadic rectangles of the form $(X_1,2X_1)\times (X_2,2X_2)$. Observe that we get  $O(\log^2 t)$ such rectangles, which implies that it suffices to bound each of these dyadic sums individually. \\  
For each such rectangle we get by two-dimensional partial summation;
\begin{align}  \label{dyadic}\sum_{\substack{X_1\leq x_1\leq 2X_1\\ X_2\leq x_2\leq 2X_2}} \frac{W^+_t(Q_+({\bf x})a^{-1/2})}{Q_+({\bf x})^{1/2+it}} =  F_+(2\X) \sum_{\substack{X_1\leq x_1\leq 2X_1\\ X_2\leq x_2\leq 2X_2}} e^{i t \log Q_+(\x)} \\
\nonumber -\int_{X_1}^{2X_1} \left( \sum_{\substack{X_1\leq x_1\leq x \\ X_2\leq x_2\leq 2X_2}}e^{it\log Q_+(\x)}\right) F_+^{(1,0)} (x, 2X_2) dx\\
\nonumber -\int_{X_2}^{2X_2} \left( \sum_{\substack{X_1\leq x_1\leq 2X_1 \\ X_2\leq x_2\leq y}}e^{it\log Q_+(\x)}\right) F_+^{(0,1)} (2X_1, y) dy\\
\nonumber + \int_{X_1}^{2X_1}\int_{X_2}^{2X_2}   \left( \sum_{\substack{X_1\leq x_1\leq x \\ X_2\leq x_2\leq y}}e^{it\log Q_+(\x)}\right)F_+^{(1,1)}(x_1,x_2)\, dxdy, 
\end{align}
where $\X=(X_1,X_2)$, $F_+({\bf x})=W^+_t(Q_+({\bf x})a^{-1/2})/Q_+( {\bf x} )^{1/2}$ and $F_+^{(i,j)}:=\frac{\partial^{i+j}F_+}{\partial x_1^i \partial x_2^j }$.\\ 
Similarly we get 
\begin{align}\label{dyadic-}\sum_{\substack{X_1\leq x_1\leq 2X_1\\ X_2\leq x_2\leq 2X_2}} \frac{W^-_t(Q_-({\bf x})a^{1/2})}{(\det Z)^{1/2}Q_-({\bf x})^{1/2-it}}=F_-(2\X)\sum_{\substack{X_1\leq x_1 \leq 2X_1 \\ X_2\leq x_2\leq 2X_2}}e^{it\log Q_-(\x)} +\ldots,  \end{align}
where $F_-({\bf x})=W^-_t(Q_-({\bf x} )a^{1/2})/\left((\det Z)Q_-( {\bf x} )\right)^{1/2}$.\\ 
Now we have reduced the desired bound on the Epstein zeta function to proving a certain estimate on exponential sums. The result we need is the following.

\begin{prop} \label{mainprop}For $\X=(X_1,X_2)$ satisfying $Q_+(\X)\ll a^{1/2}|t|^{1+\eps}$, we have the following bound;
\begin{equation}\label{mainpropeq} \frac{1}{Q_+(\X)^{1/2}}\sum_{\substack{X_1 \leq x_1 \leq X_1' \\ X_2 \leq x_2 \leq X_2'}} e^{it\log Q_+(\x)}\ll_\eps |t|^{1/3+\eps}, \end{equation}
uniformly in $a\geq 1$, where $X_i\leq X_i'\leq 2X_i$. Similarly for $\X=(X_1,X_2)$ satisfying $Q_-(\X)\ll a^{-1/2} |t|^{1+\eps}$, we have 
\begin{equation}\label{mainpropeq2}\frac{1}{((\det Z)Q_-(\X))^{1/2}}\sum_{\substack{X_1 \leq x_1 \leq X_1' \\ X_2 \leq x_2 \leq X_2'}} e^{it\log Q_-(\x)}\ll_\eps |t|^{1/3+\eps}, \end{equation}
where $X_i\leq X_i'\leq 2X_i$.
\end{prop}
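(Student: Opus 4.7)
The plan is to adapt the two-dimensional exponential sum method of Titchmarsh used by Blomer \cite{Bl18} for the diagonal case $Z=I$, lifting it to the general positive definite $Z$ with $a\geq 1$ and $|b|\leq 1/2$, carefully tracking the dependence on $a,b$ throughout. I focus on (\ref{mainpropeq}); the bound (\ref{mainpropeq2}) is analogous, being reducible to (\ref{mainpropeq}) by the change of variables $\x\mapsto(x_2,x_1)$ and the identity $(\det Z)\cdot Q_-(\x)=\x\begin{pmatrix}1&-b\\-b&a\end{pmatrix}\x^T$, which converts $Q_-$ into another quadratic form of the same type as $Q_+$ (with $b$ replaced by $-b$), exactly accounting for the factor $(\det Z)^{1/2}$ in the normalization.

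The algebraic backbone of the argument is the Hessian identity
\[
\det\Hess(t\log Q_+)(\x)\;=\;-\,\frac{4t^2\det Z}{Q_+(\x)^2},
\]
valid for every positive definite symmetric $Z$. Combined with $\det Z=a-b^2\asymp a$ for $a\geq 1$, $|b|\leq 1/2$, this pins down the non-degenerate two-dimensional curvature of the phase $\Phi=t\log Q_+$ uniformly in $a,b$, and is the main source of oscillation for the van der Corput method to exploit.

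The strategy then proceeds in two stages. First, I split into two symmetric regimes according to whether $X_2\geq\sqrt{a}X_1$ (so that $Q_+(\X)\asymp X_2^2$ and I apply Poisson summation in $x_2$) or the reverse (Poisson in $x_1$). In the first regime, stationary phase converts the inner sum into a sum over a dual frequency $h$ of length $\asymp|t|/X_2$, each term carrying amplitude $\asymp X_2|t|^{-1/2}$ and phase $\Psi(x_1,h)$ equal to the partial Legendre transform of $\Phi$ in $x_2$. A direct computation then gives $\partial_{x_1}^2\Psi=\det\Hess\Phi/\partial_{x_2}^2\Phi\asymp|t|a/X_2^2$, courtesy of the Hessian identity. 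Second, the remaining one-dimensional sum in $x_1$ (for each fixed $h$) is bounded by a van der Corput $AB$-process (Weyl differencing followed by a further stationary phase analysis), applied according to the parameter regime dictated by the relative sizes of $X_1$, $X_2$ and $a$. This produces a saving $|t|^{1/6}$ below the trivial bound in the $x_1$-direction, which after combining with the amplitude and summing over $h$ yields the desired bound $\ll|t|^{1/3+\eps}X_2\asymp|t|^{1/3+\eps}Q_+(\X)^{1/2}$.

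The main obstacle will be the uniformity in $a$, which in our setting ranges up to $|t|^{1/3}$. Every intermediate quantity—the range of stationary phase points, the length of the dual sum, the size of $\partial_{x_1}^2\Psi$ and $\partial_{x_1}^3\Psi$, and the admissibility conditions of the $AB$-process—must be tracked carefully as a function of $a$. The essential numerical cancellation making the argument work is that the factor $\sqrt{\det Z}\asymp\sqrt{a}$ coming out of the Hessian identity matches precisely the factor $a^{1/2}$ allowed in the side condition $Q_+(\X)\ll a^{1/2}|t|^{1+\eps}$, so that the exponent of $|t|$ remains exactly $1/3$, independently of the value of $a$. A handful of boundary parameter regimes (typically $X_1$ very small, or $a$ close to the extreme value $|t|^{1/3}$) will need to be handled separately by the trivial bound or by simplified variants of the above analysis.
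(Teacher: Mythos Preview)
Your reduction of (\ref{mainpropeq2}) to (\ref{mainpropeq}) via the identity $(\det Z)Q_-(\x)=x_1^2-2bx_1x_2+ax_2^2$ and swapping variables is correct and matches the paper. However, the main strategy you outline---Poisson (a $B$--process) in $x_2$, followed by a one--dimensional $AB$--process in $x_1$ for each fixed dual frequency $h$---is \emph{not} what the paper does, and as stated it does not close.

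The gap is your claimed ``saving $|t|^{1/6}$ in the $x_1$--direction''. Take $a=1$, $b=0$, and $X_1=X_2/10$ with $X_2=t^{1/2}$ (this lies in the range (\ref{lowerbound}) and satisfies $Q_+(\X)\asymp t\ll t^{1+\eps}$). After Poisson in $x_2$ your Legendre--transformed phase is explicitly $\Psi(x_1,h)$ with
\[
\partial_{x_1}^2\Psi=\frac{\det\Hess\Phi}{\partial_{x_2}^2\Phi}=\frac{2t}{(x_2^*)^2-x_1^2}\asymp \frac{t}{X_2^2}\asymp 1,\qquad
\partial_{x_1}^3\Psi\asymp\frac{tX_1}{X_2^4}\asymp t^{-1/2}.
\]
The one--dimensional third--derivative test then gives only
\[
\sum_{x_1\sim X_1}e^{i\Psi(x_1,h)}\ll X_1\lambda_3^{1/6}\asymp t^{1/2}\cdot t^{-1/12}=t^{5/12},
\]
a saving of merely $t^{1/12}$ over the trivial bound $X_1\asymp t^{1/2}$. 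Multiplying by the amplitude $X_2/\sqrt{t}\asymp 1$ and the length $t/X_2\asymp t^{1/2}$ of the $h$--sum, and normalising by $Q_+(\X)^{1/2}\asymp t^{1/2}$, yields $t^{5/12}$, which exceeds $t^{1/3}$. The second--derivative test is even worse here ($\lambda_2\asymp 1$ gives no saving at all), and higher--derivative tests do not recover the missing $t^{1/12}$ either: over this range $\Psi$ behaves to leading order like a pure quadratic in $x_1$, and the purely analytic one--variable van~der~Corput machinery cannot beat the trivial bound for such phases.

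The paper avoids this by staying genuinely two--dimensional throughout, following Titchmarsh and Blomer: a 2D Weyl differencing (Lemma~\ref{Wdiff}) with shift parameter $\rho\asymp Q_+(\X)^{1/2}a^{-1/2}|t|^{-1/3}$, then a 2D sum--to--integral step (Lemma~\ref{sumint}) on small subrectangles, then a 2D second--derivative test (Lemma~\ref{vanderC}) fed by the Hessian lower bound $\det\Hess g_\m\gg a|t|^2Q(\m)/Q(\X)^3$. The joint oscillation in both variables is what produces the full $|t|^{1/6}$ saving uniformly; splitting off one variable first, as you propose, loses exactly this in the balanced regime $X_1\asymp X_2$.
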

\begin{remark}Observe that when proving (\ref{mainpropeq}), we may assume 
\begin{equation}\label{lowerbound} X_1\gg |t|^{1/3}\quad \text{ and }\quad X_2\gg |t|^{1/3} a^{1/2}, \end{equation}
and similar when proving (\ref{mainpropeq2}), we may assume
\begin{equation}\label{lowerbound2} X_1\gg |t|^{1/3} a^{1/2}\quad \text{ and }\quad X_2\gg |t|^{1/3}, \end{equation}
since otherwise the bounds follows from the trivial estimate on the exponentials. \end{remark}

Now let us see how Theorem \ref{mainthm} follows from the above proposition.
\begin{proof}[Proof of Theorem \ref{mainthm} assuming Proposition \ref{mainprop}] We will begin by deducing from Proposition \ref{mainprop} that $E_{\text{\rm Epstein}}(Z,s)\ll_\eps (|t|+1)^{1/3+\eps}$ for all $Z$ as above; by the above reductions, it suffices to prove the same bound for each of the dyadic sums (\ref{dyadic}) and (\ref{dyadic-}) with $X_1,X_2$ satisfying respectively $Q_\pm(\X)\ll a^{\pm 1/2} |t|^{1+\eps}$. We do this by bounding each of the four terms, we get after applying partial summation separately (observe that we may assume $|t|\gg 1$).\\ 
The above estimates for $W^+_t$ imply $ W^+ _t(Q_+(\x)a^{- 1/2})\ll |t|^\eps $, which together with (\ref{mainpropeq}) implies that we can bound the first sum on the right-hand side of (\ref{dyadic}) by the following;
$$F_+(2\X) \sum_{\substack{X_1\leq x_1\leq 2X_1\\ X_2\leq x_2\leq 2X_2}} e^{i t\log Q_+(\x)}\ll |t|^{1/3+\eps}.$$
Similarly using $\frac{\partial^n }{\partial y^n}W^+_t(y)\ll |t|^{A}/y^{A+n}$ and the chain rule, we get
$$ F_+^{(1,0)}(\x) \ll \frac{|t|^{\eps}a^{1/2}}{Q_+(\X)}, \quad F_+^{(0,1)}(\x) \ll \frac{|t|^{\eps} }{Q_+(\X)}, \quad  F_+^{(1,1)}(\x) \ll \frac{|t|^{\eps}a^{1/2}}{Q_+(\X)^{3/2}},$$
which together with (\ref{mainpropeq}) implies
\begin{align*} \int_{X_1}^{2X_1} \left( \sum_{\substack{X_1\leq x_1\leq x \\ X_2\leq x_2\leq 2X_2}}e^{it\log Q_+(\x)}\right) F_+^{(1,0)} (x, 2X_2) dx\\ 
\ll  X_1 |t|^{1/3+\eps}Q_+(\X)^{1/2} \frac{a^{1/2}}{Q_+(\X)} \ll |t|^{1/3+\eps},\end{align*}
using $X_1a^{1/2}\ll Q_+(\X)^{1/2}$, and similarly for the other one-dimensional integral. Finally a similar calculation gives
\begin{align*} \int_{X_1}^{2X_1}\int_{X_2}^{2X_2}   \left( \sum_{\substack{X_1\leq x_1\leq x \\ X_2\leq x_2\leq y}}e^{it\log Q_+(\x)}\right)F^{(1,1)} (x, y)\, dxdy&\\
\ll \frac{X_1X_2 a^{1/2}Q_+(\X)^{1/2}|t|^{1/3+\eps}}{Q_+(\X)^{3/2}}&,  \end{align*}
which yields the desired bound for the $Q_+$-sum.\\
The sum involving $Q_-$ can be bounded similarly using 
\begin{align*} F_-^{(1,0)}(\x) \ll \frac{|t|^{\eps}}{(\det Z)Q_-(\X)}, \quad F_-^{(0,1)}(\x) \ll \frac{  |t|^{\eps}a^{1/2}}{(\det Z)Q_-(\X)}, \\
  F_-^{(1,1)}(\x) \ll \frac{|t|^{\eps} a^{1/2}}{\left( (\det Z)Q_-(\X)\right)^{3/2}},\end{align*}
which yields the desired bound for the Epstein zeta function.\\
Thus we conclude that 
$$ E(z,1/2+it)=\frac{y^{1/2+it}}{\zeta(1+2it)}E_{\text{\rm Epstein}}(Z,1/2+it)\ll_\eps y^{1/2}(|t|+1)^{1/3+\eps},$$
using $\zeta(1+2it)\gg_\eps (|t|+1)^{-\eps}$. This finishes the proof.
 \end{proof} 

\section{A uniform bound for an exponential sum in two variables} 
In this section we will prove Proposition \ref{mainprop} using an extension of the ideas of Titchmarsh and Blomer building on the work of van der Corput. \\ Firstly we will make a simplification; if we multiply with the phase $(\det Z)^{it}$ in (\ref{mainpropeq2}), the summands become;
$$e^{it \log (\det Z)}e^{it\log Q_-(\x)} = e^{it\log \left( (\det Z)Q_-(\x)\right)},$$ 
where $(\det Z)Q_-(\x)=x_1^2-2bx_1x_2+ax_2^2$. Since $\det Z\asymp a$, the ranges $Q_+(\X)\ll a^{1/2} |t|^{1+\eps}$ and $(\det Z)Q_-(\X)\ll (\det Z)a^{-1/2} |t|^{1+\eps}$ are the same just with $X_1$ and $X_2$ interchanged. Thus by symmetry the two bounds (\ref{mainpropeq}) and (\ref{mainpropeq2}) are equivalent, which is exactly why we used a balanced approximate functional equation in the first place.\\ 
Thus we see that it suffices to prove (\ref{mainpropeq}) under the assumption $Q_+(\x)\ll a^{1/2}|t|^{1+\eps}$. To lighten notation, we put $Q:=Q_+$. 
\subsection{Some lemmas of Titchmarsh} 
Titchmarsh \cite{Ti34} extended the van der Corput method for bounding exponential sums \cite[Section 8.3]{IwKo} to two-dimensional sums. In this section we will quote some lemmas due to Titchmarsh, which we will employ later.\\
Through-out this section we assume that 
$$f: (X_1, X_1')\times (X_2, X_2')\rightarrow \R$$ 
has algebraic partial derivatives of order one to three. We will as above use the notation $f^{(i,j)}:= \frac{\partial^{i+j}f}{\partial x_1^i\partial x_2^j}$. \\

The first lemma is a version of Weyl differencing in the two-dimensional setting. 
\begin{lemma}[Lemma $\beta$, \cite{Ti34}] \label{Wdiff}
Let $\rho \leq \min (X_1'-X_1, X_2'-X_2)$ be a positive integer. Then we have
\begin{align}\nonumber \sum_{\substack{X_1 \leq x_1\leq X_1'\\ X_2\leq x_2 \leq X_2'}} e^{if(\x)} \ll  &\frac{(X_1'-X_1)(X_2'-X_2)}{\rho}\\
\nonumber&+ \frac{(X_1'-X_1)^{1/2}(X_2'-X_2)^{1/2}}{\rho} \left(\sum_{\substack{1\leq \mu_1\leq \rho-1\\0\leq \mu_2\leq \rho-1}}  |S_1(\m)|  \right)^{1/2} \\
\label{beta}&+ \frac{(X_1'-X_1)^{1/2}(X_2'-X_2)^{1/2}}{\rho} \left(\sum_{\substack{0\leq \mu_1\leq \rho-1\\1 \leq \mu_2\leq \rho-1}}  |S_2(\m)|  \right)^{1/2},  \end{align}
where $\x=(x_1,x_2)$, $\m=(\mu_1,\mu_2)$ and
$$S_1(\m)=\sum_{\substack{X_1\leq x_1\leq X_1'-\mu_1\\X_2\leq x_2 \leq  X_2'-\mu_2}}e^{i[f(\x+\m)-f(\x)]}, \quad S_2(\m)=\sum_{\substack{X_1\leq x_1\leq X_1'-\mu_1\\X_2+\mu_2\leq x_2 \leq  X_2'}}e^{i[f(\x+(\mu_1,-\mu_2))-f(\x)]}.$$
\end{lemma}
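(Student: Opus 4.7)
The plan is to execute the two-dimensional Weyl differencing (van der Corput $A$-process). Let $S$ denote the exponential sum on the left-hand side of (\ref{beta}). First I would write the trivial identity
$$\rho^2 S \;=\; \sum_{\m\in[0,\rho-1]^2}\,\sum_{\substack{X_1\leq x_1\leq X_1'\\ X_2\leq x_2\leq X_2'}} e^{if(\x)}$$
and, for each fixed $\m$, substitute $\x\mapsto\x-\m$ in the inner sum. After interchanging summations, the new inner variable $\y$ ranges over the enlarged rectangle $R^\ast=[X_1-(\rho-1),X_1']\times[X_2-(\rho-1),X_2']$ weighted by the multiplicity $c(\y):=\#\{\m\in[0,\rho-1]^2:\y+\m\in R\}\leq\rho^2$, where $R=[X_1,X_1']\times[X_2,X_2']$. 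Cauchy-Schwarz in $\y$ then gives
$$\rho^4|S|^2 \;\leq\; |R^\ast|\cdot\sum_{\y\in R^\ast}\Bigl|\sum_{\m:\,\y+\m\in R} e^{if(\y+\m)}\Bigr|^2,$$
where $|R^\ast|\ll(X_1'-X_1)(X_2'-X_2)$ thanks to the assumption $\rho\leq\min(X_1'-X_1,X_2'-X_2)$.

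The core of the argument is to expand the modulus squared, introduce the difference vector $\boldsymbol{\nu}=\m-\m'$, and reorder the sum. The diagonal $\boldsymbol{\nu}=0$ contributes $\rho^2(X_1'-X_1)(X_2'-X_2)$, producing after reinsertion and square root the first term on the right-hand side of (\ref{beta}). For $\boldsymbol{\nu}\neq 0$ the number of representations $\m-\m'=\boldsymbol{\nu}$ in $[0,\rho-1]^2\times[0,\rho-1]^2$ is at most $\rho^2$, and the inner sum becomes, after one further shift,
$$T(\boldsymbol{\nu})\;:=\;\sum_{\z\in R\cap(R-\boldsymbol{\nu})}e^{i[f(\z+\boldsymbol{\nu})-f(\z)]}.$$

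The remaining step is to match each $\boldsymbol{\nu}$-contribution with the appropriate $S_1$ or $S_2$: the case $\nu_1>0$, $\nu_2\geq 0$ gives $T(\boldsymbol{\nu})=S_1(\boldsymbol{\nu})$ directly; the case $\nu_1>0$, $\nu_2<0$ gives $S_2((\nu_1,-\nu_2))$; the axial case $\nu_1=0$, $\nu_2<0$ is absorbed into $S_2$ by allowing $\mu_1=0$, which explains the asymmetric index ranges $(1\leq\mu_1,\,0\leq\mu_2)$ for $S_1$ and $(0\leq\mu_1,\,1\leq\mu_2)$ for $S_2$ in the statement; and the remaining sign combinations (those with $\nu_1<0$, or $\nu_1=0$ and $\nu_2>0$) follow by complex conjugation $\boldsymbol{\nu}\mapsto-\boldsymbol{\nu}$. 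Collecting everything yields
$$\rho^4|S|^2 \;\ll\;(X_1'-X_1)(X_2'-X_2)\cdot\rho^2\Bigl((X_1'-X_1)(X_2'-X_2)+\sum|S_1(\m)|+\sum|S_2(\m)|\Bigr),$$
with the sums over the ranges in (\ref{beta}); taking square roots gives the claim. The main difficulty is purely bookkeeping: ensuring each non-zero $\boldsymbol{\nu}$ is counted exactly once across $S_1$, $S_2$, and the conjugation pair, so that the asymmetric summation ranges emerge without double-counting. Conceptually the argument is the clean two-dimensional analogue of the standard one-dimensional Weyl/van der Corput differencing.
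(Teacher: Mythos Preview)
Your argument is the standard two-dimensional Weyl--van der Corput $A$-process and is correct; the bookkeeping that splits the nonzero difference vectors into the $S_1$-range, the $S_2$-range, and their negatives (handled by conjugation) is exactly right and explains the asymmetric index sets in the statement. The paper itself does not supply a proof of this lemma: it is quoted verbatim as Lemma~$\beta$ from Titchmarsh \cite{Ti34}, so there is no in-paper argument to compare against, and your write-up is precisely the proof one finds in Titchmarsh's original.
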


The above lemma reduces the task to bounding the sums $S_1(\m)$ and $S_2(\m)$ with $\mu_1,\mu_2$ in the appropriate ranges. The idea of the van der Corput method is to reduce the bound of the sums $S_1(\m)$ and $S_2(\m)$ to bounding a certain integral. We will use the following extension of van der Corput's result due to Titchmarsh.

\begin{lemma}[Lemma $\gamma$, \cite{Ti34}] \label{sumint}
Let $l=\max(X_1'-X_1, X_2'-X_2)$ and assume that $f$ satisfies
$$ |f^{(1,0)}(\x)|\leq \frac{3\pi}{2},\qquad  |f^{(0,1)}(\x)|\leq \frac{3\pi}{2}. $$
Then
\begin{align}
\sum_{\substack{X_1\leq x_1\leq X_1'\\ X_2\leq x_2\leq X_2'}} e^{if(\x)}= \int_{(X_1,X_1')\times(X_2, X_2')} e^{if(\x)}d\x+ O(l).
\end{align}
\end{lemma}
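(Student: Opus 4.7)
The plan is a two-dimensional Euler--Maclaurin / Poisson summation argument executed one variable at a time. I would first reduce to the one-dimensional analog of the lemma: if $\phi\colon [a,b]\to \R$ is smooth with $|\phi'(x)|\le 3\pi/2$ throughout $[a,b]$, then
\[
  \sum_{a\le n\le b} e^{i\phi(n)} = \int_a^b e^{i\phi(x)}\,dx + O(1).
\]
This is classical, following from truncated Poisson summation
\[
  \sum_{a\le n\le b} e^{i\phi(n)} = \sum_{m\in \Z}\int_a^b e^{i(\phi(x)-2\pi m x)}\,dx + O(1);
\]
the $m=0$ term reproduces the integral, while for each $m\neq 0$ the factor $|\phi'(x)-2\pi m|\ge \pi/2$ permits integration by parts (twice, or once combined with a Kusmin--Landau estimate), yielding a contribution $O(|m|^{-2})$ which sums absolutely in $m$.

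With this one-dimensional statement in hand, the first step in dimension two is to apply it row by row. For each fixed integer $x_2\in [X_2,X_2']$ the hypothesis $|f^{(1,0)}(\cdot,x_2)|\le 3\pi/2$ gives
\[
  \sum_{X_1\le x_1\le X_1'} e^{if(x_1,x_2)} = G(x_2) + O(1), \qquad G(x_2) := \int_{X_1}^{X_1'} e^{if(x_1,x_2)}\,dx_1.
\]
Summing over the $O(l)$ values of $x_2$ accumulates an error of $O(l)$ and reduces matters to comparing $\sum_{X_2\le x_2\le X_2'} G(x_2)$ with $\int_{X_2}^{X_2'} G(x_2)\,dx_2$.

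The second step is the delicate one. One \emph{cannot} simply re-apply the one-dimensional statement to $G$, because $G$ is not a pure exponential and its derivative $G'(x_2)=i\int f^{(0,1)}(\x)e^{if(\x)}\,dx_1$ is only bounded by $O(X_1'-X_1)$ \emph{a priori}, which would produce a ruinous error of size $(X_1'-X_1)(X_2'-X_2)$. Instead I would use Euler--Maclaurin directly,
\[
  \sum_{X_2\le x_2\le X_2'} G(x_2) = \int_{X_2}^{X_2'} G(x_2)\,dx_2 + \int_{X_2}^{X_2'} (\{x_2\}-\tfrac12) G'(x_2)\,dx_2 + O(1),
\]
expand the sawtooth into its Fourier series, and exchange summation and integration. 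The resulting correction is a sum over $m\neq 0$ of oscillatory double integrals of the shape $\int_R f^{(0,1)}(\x)\,e^{i(f(\x)-2\pi m x_2)}\,d\x$; on these, $|f^{(0,1)}-2\pi m|\ge \pi/2$ justifies integration by parts in $x_2$ (boundary contribution $O(l/|m|)$), and a further integration by parts in $x_1$, using the companion bound $|f^{(1,0)}|\le 3\pi/2$, supplies the additional $|m|^{-1}$ decay needed so that the sum over $m$ converges to $O(l)$.

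The main obstacle is precisely this second step: $G$ has no small derivative, so a symmetric treatment of both variables is forced upon us. The proof uses both hypotheses $|f^{(1,0)}|,|f^{(0,1)}|\le 3\pi/2$ in tandem, first to move from sum to integral in $x_2$ via Fourier-expanded Euler--Maclaurin, and then to extract enough $m$-decay from the oscillatory double integrals by iterated integration by parts in the two directions. Arranging the bookkeeping so that the final error is $O(l)$ rather than $O(l\log l)$ is the main technical burden; the role of the hypothesis that $f$ has algebraic partial derivatives up to order three is to guarantee that all such integrations by parts are legitimate and produce well-behaved weight functions.
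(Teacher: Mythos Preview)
The paper does not supply a proof of this lemma: it is simply quoted from Titchmarsh \cite{Ti34} (as Lemma~$\gamma$) along with Lemmas~$\beta$ and~$\epsilon$, so there is no argument in the paper to compare your sketch against. Your overall architecture---one-dimensional Poisson summation row by row, followed by an Euler--Maclaurin treatment of the resulting $x_2$-sum with the sawtooth expanded in Fourier modes---is a standard and correct route to results of this type, and you have correctly identified the genuine obstacle: $G(x_2)=\int e^{if(x_1,x_2)}\,dx_1$ has derivative of size $O(l)$, so the one-dimensional lemma cannot be reapplied na\"ively.

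There is, however, a concrete gap in your handling of the bulk term after the first integration by parts in $x_2$. The boundary contribution is indeed $O(l/|m|)$, which with the Fourier coefficient $1/m$ already sums to $O(l)$. But the remaining double integral carries the weight
\[
\partial_{x_2}\!\left(\frac{f^{(0,1)}}{i(f^{(0,1)}-2\pi m)}\right)=\frac{-2\pi m\,f^{(0,2)}}{i(f^{(0,1)}-2\pi m)^2}=O\!\left(\frac{|f^{(0,2)}|}{|m|}\right),
\]
and your proposal to integrate by parts in $x_1$ cannot extract a further $|m|^{-1}$: the $x_1$-derivative of the phase $f(\x)-2\pi m x_2$ is $f^{(1,0)}(\x)$, which has no $m$-dependence whatsoever and is only bounded \emph{above} by $3\pi/2$, not below. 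An upper bound on the phase derivative gives no oscillation to exploit.

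A correct way to close the argument is to invoke the ``algebraic'' hypothesis here rather than at the regularity level. Since $f^{(0,2)}(x_1,\cdot)$ is algebraic it changes sign $O(1)$ times, so $f^{(0,1)}(x_1,\cdot)$ has $O(1)$ monotone pieces; being confined to $[-3\pi/2,3\pi/2]$ it therefore has total variation $O(1)$, i.e.\ $\int_{X_2}^{X_2'}|f^{(0,2)}(x_1,x_2)|\,dx_2=O(1)$ uniformly in $x_1$. This bounds the bulk integral by $O(l/|m|)$ directly, without a second integration by parts, and the sum over $m$ then converges to $O(l)$. This is exactly where the hypothesis that the partial derivatives be algebraic does real work beyond smoothness.
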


Finally we gonna bound this integral by a second derivative test. 

\begin{lemma}[Lemma $\epsilon$, \cite{Ti34}] \label{vanderC}
Let $\Omega\subset \R^2$ be a rectangle and $l$ its maximal side length. If $f:\Omega\rightarrow \R$ is a function satisfying the conditions mentioned in the beginning of the section and
\begin{align} \label{normbound} r\ll |f^{(2,0)}(\x )| \ll r ,\quad r\ll |f^{(0,2)}(\x)| \ll r, \quad |f^{(1,1)}(\x)|\ll r  \\
\label{detbound} |f^{(2,0)}(\x)f^{(0,2)}(\x)-(f^{(1,1)}(\x))^2| \gg  r^2,\qquad \x \in \Omega. \end{align}
Then
$$  \int_{\Omega} e^{if(\x)}d\x \ll \frac{1+\log l+\log r}{r}, $$
where the implied constant depends only on the angle of the rectangle to the coordinate axes.  
\end{lemma}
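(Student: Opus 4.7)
The plan is to establish this two-dimensional second derivative test by iterating the one-dimensional stationary phase method in each variable separately, using the determinant hypothesis to guarantee that the phase appearing after the first integration still has a good second derivative lower bound.

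First, by a linear change of variables, I would reduce to the case where the Hessian of $f$ is close to diagonal at some reference point of $\Omega$, so that $|f^{(1,1)}|$ is small compared to $|f^{(2,0)}|$ and $|f^{(0,2)}|$ there. It is precisely this rotation that accounts for the angle-dependence of the implied constant: the rectangle $\Omega$ becomes a parallelogram whose sides are no longer aligned with the new coordinate axes. The bounds $|f^{(2,0)}|, |f^{(0,2)}| \asymp r$, $|f^{(1,1)}| \ll r$, and $|f^{(2,0)}f^{(0,2)} - (f^{(1,1)})^2| \gg r^2$ are all preserved (up to absolute constants) throughout $\Omega$. I then write the double integral as iterated single integrals over the slices of this parallelogram.

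Next, fix $x_2$ and consider the inner integral $I(x_2) = \int e^{if(x_1, x_2)}\, dx_1$ over the $x_1$-slice. Since $|f^{(2,0)}| \asymp r$, the map $x_1 \mapsto f^{(1,0)}(x_1, x_2)$ is monotone, so there is at most one stationary point $x_1 = x_1^*(x_2)$. When it lies inside the slice, a one-dimensional stationary phase expansion (e.g.\ the refined version of the second derivative test in \cite[Corollary 8.15]{IwKo}) gives
\[
 I(x_2) = \frac{e^{i f(x_1^*(x_2), x_2) \pm i\pi/4}}{\sqrt{|f^{(2,0)}(x_1^*(x_2), x_2)|/(2\pi)}} + E(x_2),
\]
with an error $E(x_2) \ll r^{-1/2}$ and explicit endpoint contributions. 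When no stationary point lies in the slice, the standard 1D van der Corput estimate already gives $I(x_2) \ll r^{-1/2}$.

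For the outer integration over $x_2$, set $\Phi(x_2) := f(x_1^*(x_2), x_2)$. Differentiating the relation $f^{(1,0)}(x_1^*(x_2), x_2) = 0$ gives $(x_1^*)'(x_2) = -f^{(1,1)}/f^{(2,0)}$, and a direct computation yields
\[
 \Phi''(x_2) = \frac{f^{(2,0)}f^{(0,2)} - (f^{(1,1)})^2}{f^{(2,0)}}\bigg|_{(x_1^*(x_2),\, x_2)} \asymp r,
\]
where the lower bound is exactly the determinant hypothesis (\ref{detbound}). Applying the 1D second derivative test once more to the $x_2$-integral of the main term $e^{i\Phi(x_2)}/\sqrt{|f^{(2,0)}|}$ gains a further factor of $r^{-1/2}$, producing an overall main term of size $r^{-1/2}\cdot r^{-1/2} = 1/r$. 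The hardest part is not this calculation but the bookkeeping for the boundary contributions: the error $E(x_2)$ and the subranges of $x_2$ where $x_1^*(x_2)$ lies near or outside the endpoints of its slice (and, analogously, the stationary point of $\Phi$ near the $x_2$-endpoints). I would handle these by a dyadic partition of the $x_2$-range according to the distance of $x_1^*(x_2)$ from the $x_1$-boundary, combined with a dyadic partition in $x_2$ near any critical point of $\Phi$. Each dyadic piece contributes $\ll 1/r$, and the total number of pieces is $O(1 + \log l + \log r)$, which is exactly the source of the stated logarithmic factor.
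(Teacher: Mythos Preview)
The paper does not give its own proof of this lemma: it is quoted verbatim as Titchmarsh's Lemma~$\epsilon$ from \cite{Ti34}, with only a remark that inserting implicit constants into Titchmarsh's lower bounds does not affect his argument. So there is nothing in the paper to compare your sketch against beyond the citation itself.

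That said, your sketch is essentially Titchmarsh's own proof (his Lemmas~$\delta$ and~$\epsilon$): one integrates in $x_1$ first by one-dimensional stationary phase/second derivative test, observes that the reduced phase $\Phi(x_2)=f(x_1^*(x_2),x_2)$ satisfies
\[
\Phi''(x_2)=\frac{f^{(2,0)}f^{(0,2)}-(f^{(1,1)})^2}{f^{(2,0)}}\Big|_{(x_1^*(x_2),x_2)}\asymp r
\]
precisely by the determinant hypothesis, and then applies the one-dimensional test again in $x_2$; the boundary and near-endpoint contributions are handled by a dyadic decomposition and account for the $\log$-factor. One small point: your preliminary rotation ``to make the Hessian nearly diagonal'' is not needed here. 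The hypotheses already give $|f^{(2,0)}|,|f^{(0,2)}|\asymp r$, and the formula for $\Phi''$ above is valid without any diagonality assumption on the Hessian. The angle-dependence in the statement arises not from diagonalising the Hessian but simply from the geometry of slicing a tilted rectangle by coordinate lines (the slice lengths and the number of boundary pieces depend on the tilt), so you can drop that first step and work directly in the given coordinates.
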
 
\begin{remark}Note that as stated, \cite[Lemma $\epsilon$]{Ti34} (or more precisely Lemma $\delta$) assumes that 
$$ |f^{(2,0)}(\x )|, |f^{(0,2)}(\x )|\geq r,\quad |f^{(2,0)}(\x)f^{(0,2)}(\x)-(f^{(1,1)}(\x))^2| \geq r^2,$$
that is; without an implicit constant in the lower bounds. By inspecting the proof of \cite[Lemma $\epsilon$]{Ti34}, one however sees that Lemma \ref{vanderC} as stated above follows with the exact same proof (this observation is also implicit in \cite{Bl18}).\end{remark}
\subsection{Applying the lemmas} 
With these results of Titchmarsh at our disposal, we are now ready to make some reductions in the direction of proving (\ref{mainpropeq}).\\ 
By applying Lemma \ref{Wdiff} with $f(\x)=t\log Q(\x)$ and $Q(\x)=ax_1^2+2bx_1x_2+x_2^2$ to the left hand side of (\ref{mainpropeq}), we reduce the task to bounding sums of the following kind; 
\begin{align}\label{S1}  S'(\m)=\sum_{\substack{X_1 \leq x_1 \leq X_1' \\ X_2 \leq x_2 \leq X_2'}} e^{ig_{\m}(\x )}, \end{align}
where 
\begin{align}\label{gmu}g_{\m}(\x):=t (\log Q(\x+\m)-\log Q(\x )),\end{align} 
$X_i'\leq 2X_i$ and $\m=(\mu_1,\mu_2)\in [0,\rho]\times [0,\rho ]$ with $\rho= o(\min(X_1,X_2))$ to be chosen appropriately later.   \\

The first step is to divide the rectangle of summation in $S'(\m)$ into rectangles $\Delta_{p,q}$ (where $p,q$ runs through an appropriate indexing set) each with side lengths $l_1\times l_2$, where
\begin{align}\label{l} l_1 \asymp \frac{Q(\X)^{3/2}} {a |t|^{1+2\eps} Q(\m)^{1/2}}, \quad l_2 \asymp \frac{Q(\X)^{3/2}}{a^{1/2} |t|^{1+2\eps} Q(\m)^{1/2}}.\end{align}  
We denote the sub-sum associated to $\Delta_{p,q}$ by $S_{p,q}(\m)$ and observe that the number of such sub-sums is bounded by; 
$$\frac{X_1X_2}{l_1 l_2}\ll  \frac{X_1X_2}{a^{-3/2}Q(\x)^3 |t|^{-2-2\eps} Q(\m)^{-1}}.$$ 
We will bound the sub-sums $S_{p,q}(\m)$ individually.
\begin{remark}
There is some balancing in choosing the values $l_1,l_2$; one the hand $l_1,l_2$ have to be small enough so that $g_{\m}$ and its derivatives are close to being constant in $\Delta_{p,q}$ (i.e. the variation is small), and on the other hand the number of rectangles $\Delta_{p,q}$ grows reciprocally with $l_1,l_2$. The reason for choosing these specific values will become clear later.
\end{remark}
\subsection{Bounds on derivatives of $g_{\m}$} 

In this subsection we will prove upper bounds on partial derivates of $g_{\m}$ and a lower bound on the determinant of the Hesse-matrix of $g_{\m}$. Titchmarsh \cite{Ti34} only considers diagonal matrices and the fact that $b\neq 0$ creates some minor technical difficulties, which were also addressed by Blomer in \cite{Bl18}. We need to be a bit more careful since we need to consider the $a$-dependence as well, so our methods of computation differ a bit from those in \cite{Bl18}; to handle the upper bounds on the derivates we apply a Taylor expansion around $\mu$ and to lower bound the Hesse determinant we use an explicit calculation. \\
First of all we will need the following lemma.
\begin{lemma}\label{boundlemma} Let $f(\x)=t\log(Q(\x))$ with $Q(x_1,x_2)=ax_1^2+2bx_1x_2+x_2^2$ where $|b|\leq 1/2$ and $a\geq 1$. Then we have
\begin{align}\label{derivativef} f^{(i,j)}(\x) \ll_{i,j} \frac{a^{i/2}|t|}{Q(\x)^{(i+j)/2}},   \end{align}
where the implied constant depends on $i,j$ but is independent of $a,b$.
\end{lemma}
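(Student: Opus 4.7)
The plan is to expand $f^{(i,j)}$ via Fa\`a di Bruno's formula applied to the composition $\phi\circ Q$ with $\phi(u)=t\log u$. Since $Q$ is a quadratic polynomial, its partial derivatives of total order three or higher vanish, so the Fa\`a di Bruno sum truncates to a finite expression indexed by set partitions of the multi-index $(i,j)$ into blocks of orders one and two only. This reduces the lemma to combinatorial bookkeeping with explicit first- and second-derivative bounds of $Q$ and a comparison $Q\asymp ax_1^2+x_2^2$.

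First I would establish $Q(\x)\asymp ax_1^2+x_2^2$ uniformly in $a\geq 1$ and $|b|\leq 1/2$; both directions follow from $2|b||x_1||x_2|\leq \tfrac{1}{2}(ax_1^2+x_2^2)$ by AM--GM, using $a\geq 1$. From this I extract the crucial first-derivative estimates $|\partial_1 Q|\ll (aQ)^{1/2}$ and $|\partial_2 Q|\ll Q^{1/2}$, which are immediate from the identities $(ax_1+bx_2)^2=aQ-(a-b^2)x_2^2$ and $(bx_1+x_2)^2=Q-(a-b^2)x_1^2$, both non-negative since $a\geq 1\geq b^2$. The nonzero higher derivatives of $Q$ are the constants $\partial_1^2 Q=2a$, $\partial_1\partial_2 Q=2b$, $\partial_2^2 Q=2$.

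Second, for a set partition $\pi$ having $m_r$ blocks equal to $\partial_r$ and $\ell_{rs}$ blocks equal to $\partial_r\partial_s$, the order constraints read $m_1+2\ell_{11}+\ell_{12}=i$ and $m_2+\ell_{12}+2\ell_{22}=j$, and the Fa\`a di Bruno term contributes
\[
\ll\;\frac{(aQ)^{m_1/2}\,Q^{m_2/2}\,a^{\ell_{11}}\,|b|^{\ell_{12}}}{Q^{m_1+m_2+\ell_{11}+\ell_{12}+\ell_{22}}}\;=\;\frac{a^{m_1/2+\ell_{11}}|b|^{\ell_{12}}}{Q^{(i+j)/2}},
\]
once the $Q$-exponent is collapsed using the constraints. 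Finally $m_1/2+\ell_{11}=i/2-\ell_{12}/2$, and the bound $|b|\leq 1/2\leq a^{1/2}$ gives $|b|^{\ell_{12}}\leq a^{\ell_{12}/2}$, so the $(a,b)$-factor is at most $a^{i/2}$. Summing the $O_{i,j}(1)$ contributing partitions and multiplying by $|t|$ yields the desired bound.

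The main obstacle I anticipate is the bookkeeping of the powers of $a$ across the Fa\`a di Bruno expansion, since derivatives in the $x_1$ direction are supposed to cost an extra $a^{1/2}$ each while a mixed second derivative $\partial_1\partial_2 Q=2b$ naively only provides a factor $|b|\leq 1/2$. The key observation driving the argument is that this \lq missing\rq\ factor of $a^{1/2}$ is precisely compensated by the off-diagonal smallness $|b|\leq 1/2\leq a^{1/2}$, which is what makes the uniform $a$-dependence work out.
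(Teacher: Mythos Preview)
Your argument is correct. The Fa\`a di Bruno bookkeeping is sound: the constraints $m_1+2\ell_{11}+\ell_{12}=i$, $m_2+\ell_{12}+2\ell_{22}=j$ indeed collapse the $Q$-exponent to $(i+j)/2$, and the identity $m_1/2+\ell_{11}=(i-\ell_{12})/2$ together with $|b|\leq 1/2\leq a^{1/2}$ recovers the full factor $a^{i/2}$. The preliminary estimates $Q\asymp ax_1^2+x_2^2$, $|\partial_1 Q|\leq 2(aQ)^{1/2}$, $|\partial_2 Q|\leq 2Q^{1/2}$ are all derived correctly.

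Your route differs from the paper's. Rather than expanding the outer composition $\phi\circ Q$, the paper factors the \emph{inner} quadratic: it writes $Q(\x)=\|L\x\|^2$ for the linear map $L=\begin{psmallmatrix}(a-b^2)^{1/2}&0\\ b&1\end{psmallmatrix}$, so that $f=h\circ L$ with $h(\y)=t\log(y_1^2+y_2^2)$. The bound $h^{(i,j)}(\y)\ll_{i,j}|t|\,\|\y\|^{-(i+j)}$ is then established by a direct computation for this single symmetric function, and the chain rule for a linear map is trivial: each $\partial_{x_1}$ picks up a factor $\ll a^{1/2}$ from the first column of $L$, while each $\partial_{x_2}$ picks up $O(1)$ from the second. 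What this buys is that all the combinatorics is absorbed into the single estimate for $h$, and the $a$-dependence becomes transparent from the shape of $L$. Your approach, by contrast, keeps $Q$ intact and exploits its low degree to truncate Fa\`a di Bruno; the price is the partition bookkeeping, but the payoff is that you never leave the original coordinates and need no auxiliary function. Both arguments ultimately hinge on the same smallness $|b|\ll a^{1/2}$ to control the mixed terms.
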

\begin{proof}
Observe that $f(\x)$ is the composition of the function $h(\x):= t\log(x_1^2+x_2^2)$ with the linear map 
$$\x\mapsto \begin{pmatrix}  (a-b^2)^{1/2} & 0 \\b & 1\end{pmatrix}\x^T, $$
where $a-b^2> 0$ by the assumptions. Now one sees by a direct computation that 
\begin{align*} 
h^{(i,j)}(\x)= t\sum_{\substack{0\leq k\leq i, k\equiv i \, (2)\\ 0\leq l\leq j, l\equiv j \, (2)}} c_{k,l} \frac{x_1^k x_2^l}{(x_1^2+x_2^2)^{(i+j+k+l)/2}},\end{align*}
for some constants $c_{k,l}$. Thus we get the bound
\begin{align}\label{hbound} h^{(i,j)}(\x)\ll_{i,j} \frac{|t|}{(x_1^2+x_2^2)^{(i+j)/2}}, \end{align}
using the elementary inequality $xy\ll_\alpha x^{1/\alpha}+y^{1/(1-\alpha)} $ for $0<\alpha<1$.\\
By the chain rule we have
$$f^{(i,j)}(\x)= \sum_{l=0}^i \binom{i}{l}(a-b^2)^{(i-l)/2}b^{l} h^{(i-l,j+l)}((a-b^2)^{1/2} x_1,bx_1+x_2),  $$
and thus the results follows from (\ref{hbound}) since $b$ is bounded. 
\end{proof} 
From this we can now prove the following bounds.
\begin{lemma} \label{hessbound} Let $\m$, $\x$ and $\X$ satisfy the constraints coming from Lemma \ref{Wdiff}.Then we have
\begin{align}
\label{normbound}\left| g_{\m}^{(i,j)}(\x)\right| &\ll a^{i/2}\frac{|t| Q(\m )^{1/2}}{Q(\X)^{(i+j+1)/2}},\\ 
\label{detbound} \det (\Hess(g_{\m}(\x)) &\gg   \left(a^{1/2} \frac{|t| \, Q(\m)^{1/2}}{Q(\X)^{3/2}} \right)^2.
\end{align}
\end{lemma}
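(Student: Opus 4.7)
The plan is to reduce both statements to routine calculations by diagonalizing $Q$ and exploiting a hidden complex-analytic structure. Set
$$L := \begin{pmatrix} \sqrt{a-b^2} & 0 \\ b & 1 \end{pmatrix}, \qquad \y := L\x, \qquad \eta := L\m,$$
so that $Q(\x)=|\y|^2$, $Q(\m)=|\eta|^2$, $\det L\asymp a^{1/2}$, and $g_\m(\x)=\tilde g_\eta(\y):=2t(\log|\y+\eta|-\log|\y|)$. By the chain rule, $\det \Hess_\x g_\m = (a-b^2)\,\det \Hess_\y \tilde g_\eta$. The constraints on $\m$ coming from Lemma \ref{Wdiff} (combined with the choice $\rho = o(\min(X_1,X_2))$) place $\m=o(\X)$ componentwise, so $Q(\m)=o(Q(\X))$, equivalently $|\eta|=o(|\y|)$, and $Q(\x+\tau\m)\asymp Q(\X)$ for $\tau\in[0,1]$ throughout the relevant region.

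For the upper bound (\ref{normbound}), I would write $g_\m^{(i,j)}(\x)$ as an integral via the fundamental theorem of calculus,
$$g_\m^{(i,j)}(\x) = \int_0^1 \bigl[\mu_1 f^{(i+1,j)}(\x+\tau\m)+\mu_2 f^{(i,j+1)}(\x+\tau\m)\bigr]\,d\tau,$$
plug in Lemma \ref{boundlemma} to each integrand, and use $Q(\x+\tau\m)\asymp Q(\X)$. The claim then collapses to the elementary inequality $a^{1/2}\mu_1+\mu_2\ll Q(\m)^{1/2}$, which follows from $Q(\m)=(a-b^2)\mu_1^2+(b\mu_1+\mu_2)^2$ together with $a-b^2\geq 3a/4$ (to bound $a^{1/2}\mu_1$) and $\mu_2^2\leq 2(b\mu_1+\mu_2)^2+2b^2\mu_1^2$ (to bound $\mu_2$).

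For the lower bound (\ref{detbound}) the key step is to identify $\R^2\cong\C$ via $z:=y_1+iy_2$, $w:=\eta_1+i\eta_2$ and observe
$$\tilde g_\eta(\y) = 2t\,\Re F(z),\qquad F(z):=\log(z+w)-\log z.$$
Since $F$ is analytic (for $z$ away from $0,-w$, a condition trivially guaranteed by $|w|=o(|z|)$), $\tilde g_\eta$ is harmonic, so $\Hess_\y \tilde g_\eta$ is trace-free, and the Cauchy--Riemann equations yield $\det \Hess_\y \tilde g_\eta = -4t^2|F''(z)|^2$. A direct computation gives $F''(z) = z^{-2}-(z+w)^{-2} = w(2z+w)/(z^2(z+w)^2)$, and the hypothesis $|w|=o(|z|)$ pins this down to $|F''(z)|\asymp|w|/|z|^3$. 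Transferring back with $(\det L)^2=a-b^2\asymp a$, $|\eta|^2=Q(\m)$ and $|z|^2\asymp Q(\X)$ produces
$$|\det \Hess_\x g_\m|\asymp \frac{a\,t^2\,Q(\m)}{Q(\X)^3},$$
which is precisely (\ref{detbound}).

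The main obstacle is the Hessian determinant bound. A head-on expansion of $\det(\Hess f(\x+\m)-\Hess f(\x))$ invites unpleasant cancellations, and the correct $a$-dependence (which is absent from Titchmarsh's diagonal-form treatment in \cite{Ti34}) becomes painful to track; this is exactly the point where a uniform version of Blomer's argument is delicate. Recognizing the hidden harmonic structure collapses the determinant to $-4t^2|F''(z)|^2$, where the analytic derivative can be read off by inspection, yielding both a clean argument and the sharp order of magnitude in $a$, $Q(\m)$ and $Q(\X)$.
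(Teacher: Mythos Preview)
Your argument is correct. For the derivative bound (\ref{normbound}) you do exactly what the paper does: write $g_{\m}^{(i,j)}$ as a mean-value integral and feed in Lemma~\ref{boundlemma}, then observe $a^{1/2}\mu_1,\mu_2\ll Q(\m)^{1/2}$. There is no substantive difference here.

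For the Hessian bound (\ref{detbound}) you take a genuinely different route. The paper simply records the closed formula
\[
\det \Hess g_{\m}(\x)=\frac{t^2(\det Q)\,Q(2\x+\m)\,Q(\m)}{Q(\x)^2 Q(\x+\m)^2}
\]
as a ``direct computation'' and reads off the lower bound. Your idea of passing to the isotropic coordinates $\y=L\x$ and recognising $\tilde g_{\eta}=2t\,\Re F$ with $F(z)=\log(z+w)-\log z$ analytic is a real improvement in clarity: harmonicity forces the Hessian to be trace-free, Cauchy--Riemann turns its determinant into $-4t^2|F''(z)|^2$, and $F''$ is a one-line computation. In fact your method \emph{rederives} the paper's explicit formula, since $|F''(z)|^2=|w|^2|2z+w|^2/(|z|^4|z+w|^4)$ translates back to $Q(\m)Q(2\x+\m)/\bigl(Q(\x)^2 Q(\x+\m)^2\bigr)$; the extra factor $-4$ you pick up shows the determinant is actually negative (the paper's display suppresses a sign, harmless because only $|\det\Hess|$ is used downstream in Lemma~\ref{vanderC}). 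What your approach buys is a conceptual explanation for why the $a$-dependence comes out as exactly $(\det L)^2\asymp a$, whereas a brute-force expansion of $\det\bigl(\Hess f(\x+\m)-\Hess f(\x)\bigr)$ obscures this and is where the uniform-in-$a$ bookkeeping is most error-prone.
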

\begin{proof}
It follows from a two-dimensional Taylor expansion that 
\begin{align}  
\label{taylor} g_{\m}(\x) = \sum_{\alpha\in \{(1,0),(0,1)\}} \frac{1}{\alpha!}\, \m^\alpha \int_0^1 f^{\alpha}(\x+t\m) dt,              
\end{align}
where we use the multi-exponential notation $(x_1,x_2)^{(i,j)}:= x_1^ix_2^j$.\\
Using Lemma \ref{boundlemma}, we see that for $\alpha=(\alpha_1,\alpha_2)\in \{(1,0),(0,1)\}$, we have
\begin{align*} \m^\alpha f^{\alpha+(i,j)}(\x)  &\ll_{i,j} \m^{\alpha} \frac{|t|a^{(\alpha_1 +i)/2}}{Q(\X)^{(i+j+1)/2}}\\
&\ll  a^{i/2}\frac{|t| Q(\mu)^{1/2}}{Q(\X)^{(i+j+1)/2}},
\end{align*}
using that $\mu_1 a^{1/2}\ll Q(\m)^{1/2}$, respectively $\mu_2\ll Q(\m)^{1/2}$.\\
Thus by applying $\frac{\partial^{i+j}}{\partial x_1^i \partial x_2^j}$ term by term in (\ref{taylor}) and the bound above, we conclude (\ref{normbound}).\\

To prove the last inequality, we apply the following direct computation;
$$ \det (\Hess(g_{\m}(\x))= \frac{t^2(\det Q) Q(2\x+\m)Q(\m)}{Q(\x)^2Q(\x+\m)^2}\gg a \frac{|t|^2 Q(\m)}{Q(\X)^3}, $$
where we used $||\m||=o(\min (X_1,X_2))$.

\end{proof}

\subsection{Proof of Proposition \ref{mainprop}} 
Now we would like to apply Lemma \ref{sumint}, but obviously we need to alter $g_{\m}$ a bit in order for the conditions on the derivatives to be satisfied. We observe that the maximum variation in $\Delta_{p,q}$ of $  g^{(1,0)}_{\m} $ is bounded by
$$ l_1 \cdot \max_{\x \in \Delta_{p,q}} \left| g^{(2,0)}_{\m}(\x) \right| +  l_2 \cdot \max_{\x \in \Delta_{p,q}}  \left|  g^{(1,1)}_{\m}(\x) \right| \ll |t|^{-\eps}, $$
where we used (\ref{normbound}), and similarly for $ g^{(0,1)}_{\m}$, in which case the variation is even smaller.\\
Thus for sufficiently large $t$ the variation in each sub-sum $S_{p,q}$ is less than $\pi$, which was exactly why we chose $l_1,l_2$ as in (\ref{l}). Thus (following Titchmarsh) we can, associated to each $\Delta_{p,q}$, find integers $M,N$ such that 
$$ G_{\m}(\x):= g_{\m}(\x)-2\pi M x_1-2\pi N x_2,$$
satisfies 
$$\left| G^{(1,0)}_{\m}(\x)\right|\leq 3\pi/2 \quad \text{and}\quad    \left| G^{(0,1)}_{\m}(\x) \right| \leq 3\pi/2,$$
for all $\x\in\Delta_{p,q}$. Thus we get by Lemma \ref{sumint} 
\begin{equation}\label{sumtoint} \sum_{\x \in \Delta_{p,q}} e^{i g_{\m}(\x)}=\sum_{\x \in \Delta_{p,q}} e^{i G_{\m}(\x)} = \int_{\Delta_{p,q}} e^{iG_{\m}(\x)}d\x +O(l). \end{equation} 
Observe that all partial derivates of order at least two of $G_{\m}$ and $g_{\m}$ coincide.   \\

We would like to apply Lemma \ref{vanderC}, but we cannot do this directly since the required lower bounds on the order two derivatives do not hold in general. By considering different cases and doing an appropriate change of variable, we can however put us in a situation where we can apply Lemma \ref{vanderC}. Titchmarsh makes similar considerations in the proof of \cite[Lemma $\zeta$]{Ti34} and on \cite[p. 497]{Ti34}, but his argument gets simplified by the fact that $G_{\m}^{(2,0)}=-a G_{\m}^{(0,2)}$ when $b=0$ (which is {\it not} true for $b\neq 0$).\\ 
The idea to deal with the non-diagonal case is quite simply to consider two cases; if the partial derivative $G_{\m}^{(1,1)}$ is small then the lower bound on the Hesse-determinant forces the two other partial derivatives to be large. If on the other hand $G_{\m}^{(1,1)}$ is large then after a change of variable, we can force the new partial derivatives $(2,0)$ and $(0,2)$ to be large. This will allow us to prove the following key lemma.  

\begin{lemma} \label{lemmatomainprop}
With notation as above we have
$$ \int_{\Delta_{p,q}} e^{ig_{\m}(\x)}d\x=\int_{\Delta_{p,q}} e^{iG_{\m}(\x)}d\x \ll |t|^{-1+\eps} \frac{Q(\X)^{3/2}}{ a^{1/2} Q(\m)^{1/2}}. $$
\end{lemma}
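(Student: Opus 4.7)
The plan is to equalize the scales of the second partial derivatives of $G_{\m}$ by a linear rescaling, apply Titchmarsh's second derivative test (Lemma~\ref{vanderC}) in the rescaled coordinates, and then transfer the bound back via the Jacobian. Concretely, set $y_{1}=a^{1/2}x_{1}$, $y_{2}=x_{2}$ and $\tilde G_{\m}(\y):=G_{\m}(y_{1}/a^{1/2},y_{2})$. By the chain rule and Lemma~\ref{hessbound} the three second partials of $\tilde G_{\m}$ are all $\ll r$, where $r:=|t|\,Q(\m)^{1/2}/Q(\X)^{3/2}$, the Hessian determinant stays $\gg r^{2}$, and $\Delta_{p,q}$ maps to a square-shaped rectangle $\tilde\Delta$ with both side lengths $\asymp l:=Q(\X)^{3/2}/(a^{1/2}|t|^{1+2\eps}Q(\m)^{1/2})$ --- this equalization of side lengths is precisely the point of the balanced choice (\ref{l}).

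To apply Lemma~\ref{vanderC} I still need lower bounds on the two pure second partials of $\tilde G_{\m}$, and here I follow the two-case scheme outlined just before the lemma statement. If $|\tilde G_{\m}^{(1,1)}|\leq \alpha r$ throughout $\tilde\Delta$ for some small absolute $\alpha>0$, the Hessian determinant bound forces $|\tilde G_{\m}^{(2,0)}\tilde G_{\m}^{(0,2)}|\gg r^{2}$, and combined with the universal upper bound $\ll r$ this gives $|\tilde G_{\m}^{(2,0)}|,|\tilde G_{\m}^{(0,2)}|\gg r$. In the complementary case I diagonalize the Hessian at a reference point $\x_{0}\in\tilde\Delta$ by an orthogonal rotation: since $\lambda_{1}^{2}+\lambda_{2}^{2}=(\tilde G_{\m}^{(2,0)})^{2}+2(\tilde G_{\m}^{(1,1)})^{2}+(\tilde G_{\m}^{(0,2)})^{2}\ll r^{2}$ while $|\lambda_{1}\lambda_{2}|\gg r^{2}$, both eigenvalues satisfy $|\lambda_{i}|\asymp r$, so at $\x_{0}$ both new pure second partials are $\gg r$ and the new mixed partial vanishes. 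The main technical obstacle is to propagate this pointwise lower bound to the whole (rotated) rectangle, and this is precisely where the choice (\ref{l}) pays off: by Lemma~\ref{hessbound} the third partials of $\tilde G_{\m}$ are $\ll r/Q(\X)^{1/2}$, so the variation of any second partial across $\tilde\Delta$ is at most $l\cdot r/Q(\X)^{1/2}\ll Q(\X)\cdot r/(a^{1/2}|t|^{1+2\eps}Q(\m)^{1/2})$, which by $Q(\X)\ll a^{1/2}|t|^{1+\eps}$ (the cutoff of the approximate functional equation) and $Q(\m)\gg 1$ is $\ll|t|^{-\eps}r$, far below the $\gg r$ threshold.

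With the hypotheses of Lemma~\ref{vanderC} now verified throughout $\tilde\Delta$ with parameter $r$, it gives $\int_{\tilde\Delta}e^{i\tilde G_{\m}(\y)}\,d\y\ll (1+\log l+\log r)/r\ll |t|^{\eps}/r$. Finally the Jacobian $d\x=a^{-1/2}d\y$ of the rescaling yields
\[\int_{\Delta_{p,q}}e^{iG_{\m}(\x)}\,d\x=a^{-1/2}\int_{\tilde\Delta}e^{i\tilde G_{\m}(\y)}\,d\y\ll \frac{|t|^{\eps}}{a^{1/2}\,r}=|t|^{-1+\eps}\,\frac{Q(\X)^{3/2}}{a^{1/2}Q(\m)^{1/2}},\]
which is the claimed bound.
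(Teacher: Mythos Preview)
Your approach is essentially the paper's: rescale to equalize the second-partial bounds, split according to the size of the mixed partial, and apply Lemma~\ref{vanderC} directly when it is small and after a further linear change of variable when it is large. Your rescaling $(y_1,y_2)=(a^{1/2}x_1,x_2)$ is a cosmetic variant of the paper's $(a^{1/4}x_1,a^{-1/4}x_2)$ (yours carries a Jacobian $a^{-1/2}$ and a correspondingly smaller $r$, the paper's has Jacobian $1$), and your first case matches the paper's Case~1 verbatim.

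There is, however, a gap in your complementary case. You diagonalize the Hessian at a reference point by an orthogonal rotation whose angle depends on the parameters $a,b,t,\m,\X$; the integration domain then becomes a rectangle at that \emph{variable} angle to the coordinate axes. But Lemma~\ref{vanderC}, as stated, carries an implied constant that depends on the angle of the rectangle, and you give no argument that this dependence is harmless over the family of angles that arise. The paper is explicit on exactly this point: instead of a data-dependent rotation it uses the change of variables $(z_1,z_2)=(dy_1-cy_2,dy_1+cy_2)$ with \emph{absolute} constants $c,d$ (chosen in terms of the constants $\lambda_1,\lambda_2$ from (\ref{normbound1})--(\ref{hessbound1})), so that the image rectangle $\Omega_{p,q}$ always sits at the fixed angle $\pi/4$; the paper then closes its proof by remarking that this fixed angle is precisely what makes the implied constant from Lemma~\ref{vanderC} uniform in $a,b,t$. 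The price the paper pays for this is a further sub-case split (its Cases~2.1 and~2.2) together with a separate variation argument on $\tilde G_{\m}^{(2,0)}$, needed to force the new pure second partials $c^2\tilde G_{\m}^{(2,0)}+d^2\tilde G_{\m}^{(0,2)}\pm\tilde G_{\m}^{(1,1)}$ to be $\gg r$. Your eigenvalue argument is conceptually cleaner, but to make it rigorous you must either verify that Titchmarsh's Lemma~$\epsilon$ in fact gives a constant uniform in the angle (at least over a compact subinterval of $(0,\pi/2)$, which you could arrange by taking $\x_0$ to be a point where $|\tilde G_{\m}^{(1,1)}|>\alpha r$, so that $\tan(2\theta)$ is bounded below), or revert to a fixed-angle transformation as the paper does.
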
 

\begin{proof} Firstly we make a change of variables to the new variables $\y=(y_1,y_2)=(a^{1/4} x_1,a^{-1/4}x_2)$, under which the integral becomes
\begin{align}\label{integraldelta} \int_{\tilde{\Delta}_{p,q}} e^{i\tilde{G}_{\m}(\y)}d\y, \end{align}
where $\tilde{G}_{\m}(\y)=G_{\m} (a^{-1/4}y_1,a^{1/4}y_2)$ and the new rectangle $\tilde{\Delta}_{p,q}$ has side lengths $(a^{1/4}l_1)\times (a^{-1/4}l_2)$.\\
The reason for doing this change of variable is that by the bounds in Lemma \ref{hessbound} and the chain rule, it now follows that all order two partial derivates of $\tilde{G}_{\m}$ are bounded by 
\begin{align}\label{r}\ll |t| a^{1/2} Q(\m)^{1/2} Q(\X)^{-3/2}=:r.\end{align}
Let $\lambda_1,\lambda_2>0$ be constants independent of $a,b$ and $t$ (large enough) such that
\begin{align}
\label{normbound1}|\tilde{G}_{\m}^{\alpha}(\y)| &\leq \lambda_1 r,\\
\label{hessbound1}|\tilde{G}_{\m}^{(2,0)}(\y)\tilde{G}_{\m}^{(0,2)}(\y)-(\tilde{G}_{\m}^{(1,1)}(\y))^2| &\geq \lambda_2 r^2 ,
\end{align}
for $\alpha\in \{(2,0),(1,1),(0,2)\}$ and $\y \in\tilde{\Delta}_{p,q} $. We now split into different cases depending on the sizes of the order two partial derivatives.\\

{\bf Case 1:} Assume that $(\tilde{G}_{\m}^{(1,1)}(\y))^2 < \lambda_2 r^2/2$ for all $\y \in \tilde{\Delta}_{p,q}$. \\
Then it follows from (\ref{hessbound1}) that 
$$|\tilde{G}_{\m}^{(2,0)}(\y)\tilde{G}_{\m}^{(0,2)}(\y)|> \lambda_2 r^2/2.$$
Thus we conclude using the bound (\ref{normbound1}) above
$$ \lambda_2 r^2/2 < |\tilde{G}_{\m}^{(2,0)}(\y)\tilde{G}_{\m}^{(0,2)}(\y)| < \lambda_1 r |\tilde{G}_{\m}^{(2,0)}(\y)|, $$
and thus $|\tilde{G}_{\m}^{(2,0)}(\y)| \gg r$ and similarly for $\tilde{G}_{\m}^{(0,2)}(\y)$. The result now follows from Lemma \ref{vanderC}. \\

{\bf Case 2:} Assume that $|\tilde{G}_{\m}^{(1,1)}(\y)|^2 \geq \lambda_2 r^2/2$ for some  $\y\in \tilde{\Delta}_{p,q}$. \\ 
This we will show implies that for any $\delta>0$, we have 
 $$|\tilde{G}_{\m}^{(1,1)}(\y)| \geq (2^{-1/2}-\delta)\lambda_2^{1/2} r$$
for {\it all} $\y \in\tilde{\Delta}_{p,q} $ when $t$ is sufficiently large. To see this we bound the variation of $\tilde{G}_{\m}^{(1,1)}$ in $\tilde{\Delta}_{p,q}$; we observe that by the chain rule
$$\tilde{G}_{\m}^{(i,j)}(\y)= a^{(j-i)/4} G_{\m}^{(i,j)}(a^{-1/4}y_1,a^{1/4}y_2),$$
and thus by applying (\ref{normbound}), we can bound the variation of $\tilde{G}_{\m}^{(1,1)}$ in $\tilde{\Delta}_{p,q}$ by;
\begin{align} \nonumber &a^{1/4}l_1 \cdot \max_{\y\in \tilde{\Delta}_{p,q}} |\tilde{G}_{\m}^{(2,1)}(a^{-1/4}y_1,a^{1/4}y_2)|+a^{-1/4}l_2\cdot \max_{\y\in \tilde{\Delta}_{p,q}} |\tilde{G}_{\m}^{(1,2)}(a^{-1/4}y_1,a^{1/4}y_2)| \\
 \nonumber &=l_1 \cdot \max_{\x\in \Delta_{p,q}} |G_{\m}^{(2,1)}(\x)|+l_2\cdot \max_{\x\in \Delta_{p,q}} |G_{\m}^{(1,2)}(\x)| \\
\nonumber &\ll \frac{Q(\X)^{3/2}}{a Q(\m)^{1/2} |t|^{1+2\eps}}\cdot \frac{a Q(\m)^{1/2} |t|}{Q(\X)^2}+\frac{Q(\X)^{3/2}}{a^{1/2} Q(\m)^{1/2} |t|^{1+2\eps}}\cdot \frac{a^{1/2} Q(\m)^{1/2} |t|}{Q(\X)^2}\\
\nonumber&\ll |t|^{-2\eps} Q(\X)^{-1/2},
\end{align}
which is $o(r)$ as $t\rightarrow \infty$ since $Q(\X) \ll a^{1/2} |t|^{1+\eps}$ (recall the definition (\ref{r}) of $r$). Now we have two further sub-cases.\\ 

{\bf Case 2.1:} If 
$$|\tilde{G}_{\m}^{(2,0)}(\y)|,|\tilde{G}_{\m}^{(0,2)}(\y)|> 2^{-2} \lambda_1^{-1} \lambda_2 r, $$ 
for {\it all} $\y \in \tilde{\Delta}_{p,q}$, then we can apply Lemma \ref{vanderC} directly.\\ 
{\bf Case 2.2:} So we may assume that, say, $|\tilde{G}_{\m}^{(2,0)}(\y)|\leq   2^{-2}\lambda_1^{-1} \lambda_2 r  $ for some $\y \in \tilde{\Delta}_{p,q}$. As above, we see using (\ref{normbound}) that the variation of $\tilde{G}_{\m}^{(2,0)}$ in $\tilde{\Delta}_{p,q}$ is bounded by
\begin{align} \nonumber &a^{1/4}l_1 \cdot \max_{\y\in \tilde{\Delta}_{p,q}} |\tilde{G}_{\m}^{(3,0)}(a^{-1/4}y_1,a^{1/4}y_2)|+a^{-1/4}l_2\cdot \max_{\y\in \tilde{\Delta}_{p,q}} |\tilde{G}_{\m}^{(2,1)}(a^{-1/4}y_1,a^{1/4}y_2)| \\
 \nonumber &= a^{-1/2} l_1 \cdot \max_{\x\in \Delta_{p,q}} |G_{\m}^{(3,0)}(\x)|+a^{-1/2}l_2\cdot \max_{\x\in \Delta_{p,q}} |G_{\m}^{(2,1)}(\x)| \\
\nonumber &\ll \frac{Q(\X)^{3/2}}{a^{3/2} Q(\m)^{1/2} |t|^{1+2\eps}}\cdot \frac{a^{3/2} Q(\m)^{1/2} |t|}{Q(\X)^2}+\frac{Q(\X)^{3/2}}{aQ(\m)^{1/2} |t|^{1+2\eps}}\cdot \frac{a Q(\m)^{1/2} |t|}{Q(\X)^2}\\
\nonumber&\ll |t|^{-2\eps} Q(\X)^{-1/2},
\end{align}
which as above is $o(r)$ as $t\rightarrow \infty$. Thus we conclude that for any $\delta'>0$;
$$|\tilde{G}_{\m}^{(2,0)}(\y)|\leq   (2^{-2}+\delta')\lambda_1^{-1} \lambda_2 r  $$ 
holds for {\it all} $\y\in\tilde{\Delta}_{p,q}$ when $t$ is sufficiently large. \\
If we write 
\begin{align}\label{cov} \z=(z_1,z_2)=( dy_1-cy_2, dy_1+cy_2), \end{align}
with $cd=1/2$, then after a change of variable the integral (\ref{integraldelta}) becomes 
$$ \int_{\Omega_{p,q}}e^{ih(\z)}d\z, $$
where $h(\z)=\tilde{G}_{\m}(cz_1+cz_2, -dz_1+dz_2)$ and $\Omega_{p,q}$ is a new rectangle with angle $\pi/4$ to the coordinate axis and maximum side length $\ll a^{1/4}l_1 \max(c,d)$. We observe that 
\begin{align*}
h^{(2,0)}&= c^2\tilde{G}_{\m}^{(2,0)}+d^2\tilde{G}_{\m}^{(0,2)}-\tilde{G}_{\m}^{(1,1)},\\
h^{(0,2)}&= c^2\tilde{G}_{\m}^{(2,0)}+d^2\tilde{G}_{\m}^{(0,2)}+\tilde{G}_{\m}^{(1,1)},\\
h^{(1,1)} &= c^2\tilde{G}_{\m}^{(2,0)}-d^2\tilde{G}_{\m}^{(0,2)}.
\end{align*}
Thus by choosing $c=\lambda_1^{1/2}\lambda_2^{-1/4}, d=\lambda_1^{-1/2}\lambda_2^{1/4}/2$ and $\delta,\delta'$ sufficiently small, we get for {\it all} $\z\in \Omega_{p,q}$ the following bounds;
$$ r \ll (2^{-1/2}-1/2-\delta-\delta')\lambda_2^{1/2} r\leq |h^{(2,0)}(\z) |, |h^{(0,2)}(\z)| \ll r,\quad |h^{(1,1)}(\z)|\ll r. $$
Since the determinant of the Hesse-matrix is unchanged under the change of variable corresponding to (\ref{cov}), the result follows from Lemma \ref{vanderC}. Observe that the implied constant we get from Lemma \ref{vanderC} is indeed uniform in $a,b$ and $t$ since the angles of the rectangles $\Omega_{p,q}$ to the coordinate axes are fixed.  
\end{proof} 

We are now ready to finish the proof of our main theorem.
\begin{proof}[Proof of Proposition \ref{mainprop} and Theorem \ref{mainthm}] 
Combining (\ref{sumtoint}) and Lemma \ref{lemmatomainprop}, we get the following bound for all $\m$ as above;   
\begin{align*}
S'(\m)&= \sum_{p,q}S_{p,q}(\m)\\ 
&\ll  \sum_{p,q} \left(|t|^{-1+\eps} \frac{Q(\X)^{3/2}}{a^{1/2}Q(\m)^{1/2}}+l_2\right) \\
&\ll  \frac{X_1X_2}{a^{-3/2}Q(\X)^3 |t|^{-2-4\eps} Q(\m)^{-1}} \cdot \left( |t|^{-1+\eps} \frac{Q(\X)^{3/2}}{a^{1/2}Q(\m)^{1/2}}+\frac{Q(\X)^{3/2}}{a^{1/2} |t|^{1+2\eps}Q(\m)^{1/2}}\right)\\
&\ll a^{1/2} \frac{|t|^{1+5\eps} Q(\m)^{1/2}}{Q(\X)^{1/2}},
\end{align*} 
where we used $a^{1/2}X_1X_2\ll Q(\X)$. Plugging this into Lemma \ref{Wdiff} yields;
\begin{align*} \frac{1}{Q(\X)^{1/2}}\sum_{\substack{X_1 \leq x_1\leq X_1'\\ X_2\leq x_2\leq X_2'}} & e^{if(x_1,x_2)}\\
&\ll  \frac{X_1X_2}{Q(\X)^{1/2}\rho} +\frac{(X_1X_2)^{1/2}}{Q(\X)^{1/2}\rho}\left( \sum_{0\leq \mu_1, \mu_2\leq  \rho} \frac{a^{1/2} |t|^{1+5\eps} Q(\m)^{1/2}}{Q(\X)^{1/2}} \right)^{1/2}\\
&\ll \frac{Q(\X)^{1/2}}{a^{1/2}\rho}+ \frac{|t|^{1/2+3\eps}}{ Q(\X)^{1/4}\rho}\left( \sum_{0\leq \mu_1, \mu_2\leq  \rho} Q(\m)^{1/2} \right)^{1/2}\\
&\ll \frac{Q(\X)^{1/2}}{a^{1/2}\rho}+ \frac{|t|^{1/2+3\eps}a^{1/4}}{ Q(\X)^{1/4}\rho}\left( \sum_{||\m||\leq \rho} ||\m|| \right)^{1/2}\\
&\ll \frac{Q(\X)^{1/2}}{a^{1/2}\rho}+ \frac{|t|^{1/2+3\eps}a^{1/4} \rho^{1/2} }{ Q(\X)^{1/4}}.
 \end{align*}
Finally we choose an integer $ \rho \asymp Q(\X)^{1/2}|t|^{-1/3} a^{-1/2} $ to balance the terms, which yields the desired bound $\ll_\eps |t|^{1/3+3\eps}$. This choice of $\rho$ is admissible with respect to the conditions in Lemma \ref{Wdiff} since first of all
$$ \rho \ll a^{1/4} |t|^{1/2+\eps} |t|^{-1/3}a^{-1/2} =|t|^{1/6+\eps}a^{-1/4}, $$
which is less than $X_1$ and $X_2$ by (\ref{lowerbound}) and secondly we have $\rho \gg 1$, which again follows from (\ref{lowerbound}). \\
This proves Proposition \ref{mainprop} and consequently we conclude the proof of Theorem \ref{mainthm}. \end{proof}

\section{Lower bounds for the sup norm and a conjecture} 
As a concluding remark we will make some consideration on the best possible bound of the type (\ref{bound1}). First of all the appearance of $y^\delta$ in (\ref{bound1}) is necessary in the sense that for a fixed $t$, the Eisenstein series is unbounded because of the constant Fourier coefficient. We will now show that the lower bound $\delta\geq 1/2$ holds for any bound of the form (\ref{bound1}) and state a uniform version of the sup norm conjecture for Eisenstein series. \\

We have for $t$ fixed the following crude bound for the $K$-Bessel function \cite[p. 60]{Iw};
$$ K_{it}(y)\ll_t y^{-1/2}e^{-y},$$
as $y\rightarrow \infty$. Thus from the Fourier expansion of the Eisenstein series \cite[Theorem 3.4]{Iw};
\begin{align*}  E(z,s)&=y^s+\varphi(s)y^{1-s}+ 4\sqrt{y} \sum_{n\geq 1} \frac{K_{s-1/2}(2\pi y n)\tau_{s-1/2}(n)}{\Gamma(s) \zeta(2s) \pi^{-s}}\, \cos(2\pi x n), \end{align*}
we see that 
\begin{align}\label{trivialbound}E(z,1/2+it)=y^{1/2+it}+\varphi(1/2+it)y^{1/2-it}+O_t(e^{-\pi y}).  \end{align}
Now observe that for fixed $t\geq 1$, we can choose arbitrarily large $y$ such that 
$$1+\varphi(1/2+it)y^{-2it}=2,$$
using that $|\varphi(1/2+it)|=1$.\\
For such $y$, we thus have
$$E(z,1/2+it)=y^{1/2}(2y^{it}+o_t(1))\gg y^{1/2},$$
when $t$ is sufficiently large. Since we can let $y\rightarrow \infty$, we conclude that any bound of the form (\ref{bound1}) has to satisfy $\delta\geq 1/2$.\\

One might speculate that the following holds for any $\eps>0$;
\begin{equation}\label{conjecture} \text{\bf Conjecture: }\quad E(z, 1/2+it)\ll_\eps y^{1/2} (|t|+1)^\eps , \end{equation}
uniformly for $z\in \mathcal{F}$, the standard fundamental domain (\ref{F}) for $\Gamma_0(1)$. Note that this conjecture together with (\ref{squareL}) implies simultaneous Lindelöf in the $t$-aspect and on average in the $D$-aspect for the family of class group $L$-functions of imaginary quadratic fields. \\

\bibliographystyle{amsplain}
\providecommand{\bysame}{\leavevmode\hbox to3em{\hrulefill}\thinspace}
\providecommand{\MR}{\relax\ifhmode\unskip\space\fi MR }
\providecommand{\MRhref}[2]{%
  \href{http://www.ams.org/mathscinet-getitem?mr=#1}{#2}
}
\providecommand{\href}[2]{#2}

\end{document}